\title[Reflexivity and dualizability in categorified linear algebra]{Reflexivity and dualizability in categorified\\ linear algebra}
\author{Martin Brandenburg}
\thanks{M. Brandenburg: \MAILTO{brandenburg@uni-muenster.de}}
\author{Alexandru Chirvasitu}
\thanks{A. Chirvasitu: \MAILTO{chirva@math.washington.edu}}
\author{Theo Johnson-Freyd}
\thanks{T. Johnson-Freyd: \MAILTO{theojf@math.northwestern.edu}}
\newtheorem{theorem}[subsection]{Theorem}
\newtheorem{lemma}[subsection]{Lemma}
\newtheorem{proposition}[subsection]{Proposition}
\newtheorem{corollary}[subsection]{Corollary}
\theoremstyle{definition} 
\newtheorem{definition}[subsection]{Definition}
\newtheorem{examplenodiamond}[subsection]{Example}
\newtheorem{remarknodiamond}[subsection]{Remark}
\newenvironment{example}{\begin{examplenodiamond}}{\hfill\ensuremath\Diamond\end{examplenodiamond}}
\newenvironment{remark}{\begin{remarknodiamond}}{\hfill\ensuremath\Diamond\end{remarknodiamond}}
\renewcommand\qedhere{\qed}
\newcounter{stepofproof}
\newenvironment{proofstep}[1]{\refstepcounter{stepofproof}\textbf{Step \arabic{stepofproof}: #1}}{}
\crefname{section}{Section}{Section}
\crefname{definition}{Definition}{Definitions}
\crefname{example}{Example}{Examples}
\crefname{examplenodiamond}{Example}{Examples}
\crefname{remark}{Remark}{Remarks}
\crefname{remarknodiamond}{Remark}{Remarks}
\crefname{convention}{Convention}{Conventions}
\crefname{lemma}{Lemma}{Lemmas}
\crefname{proposition}{Proposition}{Propositions}
\crefname{corollary}{Corollary}{Corollaries}
\crefname{theorem}{Theorem}{Theorems}
\crefname{assumption}{Assumption}{Assumptions}
\crefname{equation}{}{}
\crefname{proofstep}{Step}{Steps}
\tikzset{
    state/.style={
           rectangle,
           rounded corners,
           draw=black, 
           minimum height=2em,
           inner sep=4pt,
           text centered,
           },
}
\newcommand\cat[1]{\textsc{#1}}
\newcommand\define[1]{\emph{#1}}
\newcommand\arXiv[1]{\href{http://arxiv.org/abs/#1}{\nolinkurl{arXiv:#1}}}
\newcommand\MRnumber[1]{\href{http://www.ams.org/mathscinet-getitem?mr=#1}{\nolinkurl{MR#1}}}
\newcommand\DOI[1]{\href{http://dx.doi.org/#1}{\nolinkurl{DOI:#1}}}
\newcommand\MAILTO[1]{\href{mailto:#1}{\nolinkurl{#1}}}
\newcommand\bG{\mathbb G}
\newcommand\bK{\mathbb K}
\newcommand\bL{\mathbb L}
\newcommand\bN{\mathbb N}
\newcommand\bP{\mathbb P}
\newcommand\cC{\mathcal C}
\newcommand\cD{\mathcal D}
\newcommand\cE{\mathcal E}
\newcommand\cI{\mathcal I}
\newcommand\cJ{\mathcal J}
\newcommand\cM{\mathcal M}
\newcommand\cO{\mathcal O}
\newcommand\cS{\mathcal S}
\newcommand\cT{\mathcal T}
\newcommand\cX{\mathcal X}
\newcommand\cY{\mathcal Y}
\DeclareMathOperator\Hom{\cat{Hom}}
\DeclareMathOperator\End{\cat{End}}
\newcommand\Vect{\cat{Vect}}
\newcommand\Pres{\cat{Pres}}
\DeclareMathOperator\Spec{Spec}
\DeclareMathOperator\QCoh{\cat{Qcoh}}
\newcommand\op{\mathrm{op}}
\DeclareMathOperator\Fun{\cat{Fun}}
\newcommand\dd{\mathrm{dd}}
\newcommand\ev{\mathrm{ev}}
\newcommand\coev{\mathrm{coev}}
\newcommand\can{\mathrm{can}}
\DeclareMathOperator*\colim{colim}
\renewcommand\lim{\varprojlim}
\renewcommand\colim{\varinjlim}
\begin{document}

\maketitle

\begin{abstract}
  The ``linear dual'' of a cocomplete linear category  $\cC$ is the category of all cocontinuous linear functors $\cC \to \Vect$.  We study the  questions of when a cocomplete linear category is reflexive (equivalent to its double dual) or dualizable (the pairing with its dual comes with a corresponding copairing).  Our main results are that the category of comodules for a countable-dimensional coassociative coalgebra is always reflexive, but (without any dimension hypothesis) dualizable if and only if it has enough projectives, which rarely happens.  Along the way, we prove that the category $\QCoh(X)$ of quasi-coherent sheaves on a stack $X$ is not dualizable if $X$ 
  is the classifying stack of a semisimple algebraic group in positive characteristic or if $X$ is a scheme containing a closed projective subscheme of positive dimension,
   but is dualizable if $X$ is the quotient of an affine scheme by a virtually linearly reductive group. Finally we prove
   tensoriality (a type of Tannakian duality)
   for affine ind-schemes with countable indexing poset.
\end{abstract}

\section{Introduction}

Fix a field $\bK$.  For cocomplete $\bK$-linear categories $\cC$ and $\cD$, let $\Hom(\cC,\cD) = \Hom_{c,\bK}(\cC,\cD)$ denote the category of cocontinuous $\bK$-linear functors and natural transformations from $\cC$ to $\cD$, and let $\cC\boxtimes\cD = \cC \boxtimes_{c,\bK} \cD$ denote the universal cocomplete $\bK$-linear category receiving a functor $\cC \times \cD \to \cC \boxtimes \cD$ which is cocontinuous and $\bK$-linear in each variable (while holding the other variable fixed).  
If $\cC$ and $\cD$ are both locally presentable (see \Cref{defn.lpcat}), then $\Hom(\cC,\cD)$ and $\cC \boxtimes \cD$ both exist as locally small categories.  Moreover, both are locally presentable  $\bK$-linear categories, and $\Hom$ and $\boxtimes$ satisfy a hom-tensor adjunction.
  The unit for $\boxtimes$ is $\Vect = \Vect_\bK$, and we will let $\cC^* = \Hom(\cC,\Vect)$. It is the ``linear dual'' of $\cC$ in the context of cocomplete $\bK$-linear categories.

Thus the bicategory of locally presentable $\bK$-linear categories provides one possible categorification of linear algebra.  It includes as a full sub-bicategory the Morita bicategory $\cat{Alg}$ of (associative) algebras, bimodules, and intertwiners; the inclusion sends an algebra $A$ to the category $\cM_A$ of right $A$-modules.  But locally presentable $\bK$-linear categories are more general: among them are, for any scheme $X$ over $\bK$, the category $\QCoh(X)$ of quasi-coherent sheaves of $\cO_X$-modules, and also for any (coassociative) coalgebra $C$ over $\bK$, the category $\cM^C$ of right $C$-comodules.

In any generalization of linear algebra, it is interesting to ask what  are the ``finite-dimensional'' objects.  Two possible generalizations of finite-dimensionality are:

\begin{definition}\label{defn.dualizable}

 A locally presentable $\bK$-linear category $\cC$ is called \define{dualizable} if the canonical functor $\cC \boxtimes \cC^* \to \End(\cC)$ is an equivalence.    
A locally presentable $\bK$-linear category $\cC$ is called \define{reflexive} if the canonical functor $\cC \to (\cC^*)^*$ is an equivalence. 

\end{definition}

Dualizability implies reflexivity, but the converse is generally false.  Dualizability is particularly important in light of \cite{BaeDol95,Lur09}.  For example, 
a well-known corollary of the Eilenberg--Watts theorem (which asserts that $\Hom(\cM_{A},\cM_{B}) \simeq {\cM_{A^{\op}\otimes B}}$) answers the question of dualizability in the affirmative for objects of the Morita bicategory $\cat{Alg}$:

\begin{theorem}[Folklore] \label{thm.EW}

  For any associative algebra $A$, the category $\cM_A$ is dualizable. The dual is $(\cM_A)^* \simeq \cM_{A^{\op}}$. \qedhere

\end{theorem}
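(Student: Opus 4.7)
The plan is to deduce dualizability directly from the two applications of the Eilenberg--Watts theorem that the statement is advertised as a corollary of. First I would take $B = \bK$ in the equivalence $\Hom(\cM_A, \cM_B) \simeq \cM_{A^{\op} \otimes B}$, yielding $(\cM_A)^* \simeq \cM_{A^{\op}}$; the equivalence sends a cocontinuous functor $\varphi\colon \cM_A \to \Vect$ to the left $A$-module (equivalently, right $A^{\op}$-module) $\varphi(A)$, and sends a left $A$-module $V$ to the functor $N \mapsto N \otimes_A V$.

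Next I would identify $\cM_A \boxtimes \cM_B \simeq \cM_{A \otimes B}$ by checking the universal property of $\boxtimes$: a cocontinuous $\bK$-bilinear functor $\cM_A \times \cM_B \to \cE$ is determined by its value on the pair $(A,B)$ together with commuting right $A$- and $B$-actions, which is the same datum as a right $A \otimes B$-module object in $\cE$, and by the single-variable Eilenberg--Watts theorem applied to $\cM_{A \otimes B}$ these correspond to cocontinuous functors $\cM_{A \otimes B} \to \cE$. Combining the two identifications yields
\[
\cM_A \boxtimes (\cM_A)^* \;\simeq\; \cM_A \boxtimes \cM_{A^{\op}} \;\simeq\; \cM_{A \otimes A^{\op}} \;\simeq\; \End(\cM_A),
\]
where the last equivalence is Eilenberg--Watts once more, now with $B = A$.

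The step I expect to cost the most care is verifying that the composite of these equivalences really is the canonical functor of \Cref{defn.dualizable}, rather than merely some equivalence of the two sides. To pin this down I would unwind both maps on generators: the canonical functor sends a pair $(M,\varphi) \in \cM_A \boxtimes (\cM_A)^*$ to the endofunctor $N \mapsto \varphi(N) \otimes_\bK M$, with right $A$-action inherited from $M$. Substituting the Eilenberg--Watts formula $\varphi(N) = N \otimes_A \varphi(A)$ rewrites this as $N \mapsto N \otimes_A \bigl(\varphi(A) \otimes_\bK M\bigr)$, and the bimodule $\varphi(A) \otimes_\bK M$ is precisely the image of $(M,\varphi)$ under the chain of equivalences displayed above. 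Hence the canonical map is an equivalence, which is the claimed dualizability, and the dual is $\cM_{A^{\op}}$ by the first step.
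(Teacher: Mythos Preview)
Your argument is correct, and it matches the paper's framing of \Cref{thm.EW} in the introduction as a direct corollary of Eilenberg--Watts: the paper states the theorem without proof (it carries only a \texttt{\textbackslash qedhere}), taking $\Hom(\cM_A,\cM_B)\simeq\cM_{A^{\op}\otimes B}$ as the input.

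The paper does, however, revisit the result in \Cref{eg.modules} with a slightly different emphasis. Rather than assuming Eilenberg--Watts and deducing dualizability, it observes that $A\in\cM_A$ is a tiny spanning object, applies \Cref{thm.fundamental} to obtain $\Hom(\cM_A,\cD)\simeq{}_A\cM\boxtimes\cD$ for arbitrary $\cD$ (thus deriving Eilenberg--Watts rather than quoting it), and reads off dualizability as the case $\cD=\cM_A$; \Cref{cor.dualizability} later packages this as the general statement that tinily-spanned categories are dualizable. Your approach and the paper's are equivalent in content---both rest on the single tiny generator $A$---but the paper's route makes the mechanism (tiny generation) explicit and yields the general \Cref{cor.dualizability}, while your route is more self-contained if one already grants the bimodule form of Eilenberg--Watts. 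Your extra care in checking that the composite equivalence is the canonical functor of \Cref{defn.dualizable} is a point the paper leaves implicit.
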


There is no Eilenberg--Watts theorem for coassociative coalgebras.  Thus the questions of reflexivity and dualizability are more subtle.  Our main results on dualizability are:

\begin{theorem}  \label{thm.coalgebras-dualizability}
   Let $C$ be a coassociative coalgebra.  Then $\cM^C$ is dualizable if and only if it has enough projectives.
\end{theorem}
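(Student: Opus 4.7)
The plan is to prove the two implications separately.

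For the ``enough projectives $\Rightarrow$ dualizable'' direction, assume $\cM^C$ has enough projectives. I would first show that every indecomposable projective comodule $P$ is finite-dimensional, arguing as follows: every comodule is a filtered union of its finite-dimensional subcomodules, so given a projective cover $P \twoheadrightarrow S$ of a simple $S$, any finite-dimensional subcomodule $P_0 \subseteq P$ with nonzero image in $S$ surjects onto $S$ by simplicity, and essentiality of the projective cover then forces $P_0 = P$. A set $\{P_i\}_{i \in I}$ of representatives of indecomposable projectives thus consists of compact objects which together generate $\cM^C$. A standard Morita-theoretic argument identifies $\cM^C$ with $\cM_A$, where $A$ is the small $\bK$-linear category with object set $I$ and hom-spaces $\Hom_{\cM^C}(P_i, P_j)$; the proof of \Cref{thm.EW} generalizes routinely to this many-object setting, yielding dualizability of $\cM_A \simeq \cM^C$.

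For the converse, assume $\cM^C$ is dualizable. Via the equivalence $\End(\cM^C) \simeq \cM^C \boxtimes (\cM^C)^*$, the identity functor corresponds to the coevaluation $\omega = \coev(\bK)$. Writing $\omega$ as a small colimit of pure tensors $\omega \cong \colim_i P_i \boxtimes f_i$ with $P_i \in \cM^C$ and $f_i \in (\cM^C)^*$, one obtains, for every comodule $N$, a colimit presentation
\[
  N \cong \colim_i f_i(N) \otimes P_i.
\]
Granting that the $P_i$ may be chosen to be compact projective, this formula displays every $N$ as a quotient of a coproduct of copies of the $P_i$, proving that $\cM^C$ has enough projectives.

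The main obstacle is this last step: refining the pure-tensor decomposition of $\omega$ so that each $P_i$ becomes compact projective. My approach would combine two ingredients. First, compact objects of the tensor product $\cM^C \boxtimes (\cM^C)^*$ can be taken to be finite colimits of pure tensors of compact objects from each factor, so the $P_i$ can be arranged to be compact, i.e.\ finite-dimensional in $\cM^C$. Second, the triangle identities for $\coev$ and $\ev$, applied to the coevaluation $\omega$, promote compactness of these $P_i$ to projectivity by forcing the structural maps $\bigsqcup_i f_i(N) \otimes P_i \to N$ to be epimorphic and cocontinuous in $N$. The bookkeeping of these colimit manipulations --- in particular, replacing the abstract colimit indexing by a filtered or discrete one amenable to the triangle-identity argument --- is where I anticipate the greatest technical difficulty.
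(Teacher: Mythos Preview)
Your argument for ``enough projectives $\Rightarrow$ dualizable'' is correct and matches the paper's approach: indecomposable projectives are finite-dimensional (hence compact), they generate, and a category spanned by compact projectives is dualizable by \Cref{cor.dualizability}.

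The converse has a genuine gap at exactly the point you flag. You correctly note that the coevaluation object $\omega$ can be written as a colimit of pure tensors $P_i \boxtimes f_i$ with the $P_i$ compact (finite-dimensional). But your proposed mechanism for upgrading compactness to projectivity --- ``the triangle identities \ldots\ promote compactness of these $P_i$ to projectivity by forcing the structural maps $\bigsqcup_i f_i(N)\otimes P_i \to N$ to be epimorphic and cocontinuous in $N$'' --- does not work. The triangle identity indeed yields $N \cong \colim_i f_i(N)\otimes P_i$, hence an epimorphism from $\bigoplus_i f_i(N)\otimes P_i$ onto $N$, natural and cocontinuous in $N$; but this says only that the compact objects $P_i$ \emph{generate}, which is true in \emph{every} comodule category (all comodules are unions of finite-dimensional subcomodules). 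It gives no leverage toward projectivity. Concretely, $\cM^{\bK[x]}$ is generated by its one-dimensional simple and has no nonzero projectives, so the inference from ``compact generators exist'' to ``enough projectives'' is simply false. The difficulty is not bookkeeping with colimit shapes; the step is wrong in principle.

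The paper proves the converse by contrapositive, using a concrete obstruction rather than a decomposition of $\omega$. If $\cM^C$ lacks enough projectives, one finds (via injective hulls and dualizing) a simple right comodule $S^*$ admitting essential surjections $T^* \twoheadrightarrow S^*$ with $T^*$ finite-dimensional of arbitrarily large dimension. The structural input is \Cref{lemma.essentialimage}: every $F$ in the essential image of $\cM^C \boxtimes (\cM^C)^* \to \End(\cM^C)$ is a filtered colimit $F = \varinjlim_i F_i$ with $F_i$ factoring through $\cM^{C_i}$ for a finite-dimensional subcoalgebra $C_i \subseteq C$. For any natural transformation $\theta_i : F_i \to \mathrm{id}$, choose $T^* \twoheadrightarrow S^*$ essential with $\dim T^*$ so large that $T^* \notin \cM^{C_i}$; then the image of $\theta_i(T^*)$ lies in a proper subobject of $T^*$, so by essentiality $F_i(T^*) \to S^*$ vanishes, and since $F_i(T^*) \twoheadrightarrow F_i(S^*)$ this forces $\theta_i(S^*) = 0$. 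Hence no such $F$ is isomorphic to $\mathrm{id}$, and $\cM^C$ is not dualizable. This argument bypasses entirely the question of how $\omega$ decomposes.
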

Such a coalgebra is called \define{right semiperfect} in~\cite{MR0498663} (generalizing the notion from~\cite{MR0157984}).
\begin{theorem} \label{thm.projective}
  Let $X$ be a $\bK$-scheme. If $X$ has a closed projective subscheme of positive dimension, then $\QCoh(X)$ is not dualizable. 
\end{theorem}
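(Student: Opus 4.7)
My approach has three main stages: reduce to the case where $X$ is itself projective via a retract argument, deduce from dualizability the existence of enough projectives, and then exhibit the standard obstruction that prevents $\QCoh$ of a positive-dimensional projective scheme from having any nonzero projective objects.

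\emph{Step 1 (Reduction to the projective case).} Let $i\colon Y \hookrightarrow X$ be the given closed projective subscheme. Since $i$ is affine, the pushforward $i_\ast\colon \QCoh(Y) \to \QCoh(X)$ is cocontinuous; its left adjoint $i^\ast$ is cocontinuous as well, and $i^\ast i_\ast \simeq \mathrm{id}_{\QCoh(Y)}$. This exhibits $\QCoh(Y)$ as a retract of $\QCoh(X)$ in the symmetric monoidal bicategory of locally presentable $\bK$-linear categories. Retracts of dualizable objects are dualizable, so it suffices to prove the theorem in the case $X = Y$ is projective of positive dimension.

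\emph{Step 2 (Dualizability forces enough projectives).} I would prove, in parallel with the reasoning behind \Cref{thm.coalgebras-dualizability}, that any dualizable locally presentable $\bK$-linear abelian category $\cC$ has enough projectives. The coevaluation $\Vect \to \cC \boxtimes \cC^\ast$ produces an object whose image under the canonical functor to $\End(\cC)$ is the identity; decomposing this object as a filtered colimit of ``rank-one'' pieces $p_i \boxtimes \phi_i$ presents $\mathrm{id}_\cC$ as a colimit of functors $p_i \otimes \phi_i(-)$. For $\phi_i$ to belong to $\cC^\ast = \Hom(\cC,\Vect)$ it must be cocontinuous, and being of the form $\mathrm{Hom}_\cC(p_i,-)$ this forces the $p_i$ to be projective; since $\mathrm{id}_\cC$ is expressed through them, every object of $\cC$ is a quotient of a coproduct of $p_i$'s. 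The point of carrying out this argument abstractly, rather than citing \Cref{thm.coalgebras-dualizability} directly, is that $\QCoh(Y)$ is generally not a comodule category: comodule categories are locally finite, while $\QCoh$ of a positive-dimensional scheme is not.

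\emph{Step 3 (Projective schemes of positive dimension have no nonzero projectives).} Suppose $0 \neq P \in \QCoh(Y)$ were projective. Projectivity demands that $\mathrm{Ext}^1(P,\cG) = 0$ for every $\cG$. Using an ample line bundle $\cO_Y(1)$ and a suitable Serre-theoretic calculation on the positive-dimensional $Y$ (e.g., nonvanishing of $H^1$ of appropriately chosen twists), I would exhibit a $\cG$ together with a nonzero class in $\mathrm{Ext}^1(P,\cG)$, contradicting projectivity. The positive-dimension hypothesis enters essentially here: for $\dim Y = 0$, $Y$ is affine and $\QCoh(Y)$ is dualizable by \Cref{thm.EW}. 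Combined with Step 2, this contradiction completes the proof.

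\emph{Main obstacle.} The substantive step is Step 2, where one must distill the ``dualizability $\Rightarrow$ enough projectives'' mechanism from the comodule context into a statement about general locally presentable $\bK$-linear categories. Step 3 is a mostly routine Ext calculation (with care required to produce the nonvanishing class from $P$, which a priori need not be coherent — one approximates by a coherent subquotient and transports the obstruction), and Step 1 is soft category theory.
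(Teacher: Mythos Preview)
Your Step~1 is fine and coincides with the paper's own reduction (their \Cref{cor.closed_subscheme}). The genuine gap is Step~2. The assertion that a dualizable locally presentable abelian $\bK$-linear category must have enough projectives is precisely the conjecture the paper leaves open (the remark following \Cref{cor.dualizability}); it is established there only for comodule categories, and that proof depends essentially on the filtration of $\cM^C$ by finite-dimensional subcoalgebras, not on an abstract decomposition of the coevaluation. Your sketch does not close the gap: objects of $\cC \boxtimes \cC^*$ are not in general filtered colimits of pure tensors $p_i \boxtimes \phi_i$ --- knowing this would already amount to something like tiny-spanning of $\cC$ or $\cC^*$ --- and even granting such a decomposition, an arbitrary $\phi_i \in \cC^* = \Hom(\cC,\Vect)$ has no reason to be of the form $\hom_\cC(p_i,-)$, so no projectives are produced. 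Since Step~3 rests on Step~2, the argument does not go through.

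The paper's proof is structurally different and never passes through ``enough projectives.'' Using an ample $\cO(1)$ it writes $\QCoh(X) \simeq \varinjlim_{n \to -\infty} \QCoh(X)_{\ge n}$, where $\QCoh(X)_{\ge n}$ is the full cocomplete subcategory spanned by $\{\cO(m) : m \ge n\}$. An argument parallel to \Cref{lemma.essentialimage} shows that any $F$ in the essential image of $\QCoh(X)^* \boxtimes \QCoh(X) \to \End(\QCoh(X))$ is a colimit $\varinjlim_n F_n$ with each $F_n$ factoring through $\QCoh(X)_{\ge n}$. For fixed $k$ and any $n \le k$, one surjects onto $\cO(k)$ from some $\bigoplus \cO(m)$ with all $m < n$; since (by positive-dimensionality) there are no nonzero maps from objects of $\QCoh(X)_{\ge n}$ to such $\cO(m)$, any natural transformation $\theta_n : F_n \to \mathrm{id}$ must vanish on $\cO(k)$. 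Hence $\mathrm{id}_{\QCoh(X)}$ is not in the essential image, and $\QCoh(X)$ is not dualizable. This is the direct analogue of the essential-extension argument in the proof of \Cref{thm.coalgebras-dualizability}, with the filtration by $\QCoh(X)_{\ge n}$ playing the role of the filtration by $\cM^{C_i}$.
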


In order to state the next result, recall (\cite{MR1395068}) that a linear algebraic group is \define{linearly reductive} if its category of representations is semisimple. It is \define{virtually linearly reductive} if it has a linearly reductive normal algebraic subgroup such that the quotient is a finite group scheme.

\begin{theorem}\label{thm.stacks}
  Let $X$ an affine scheme over $\bK$ and $G$ a virtually linearly reductive group over $\bK$ acting on $X$.  Let $[X/G]$ denote the corresponding quotient stack.  Then $\QCoh([X/G])$ is dualizable.
\end{theorem}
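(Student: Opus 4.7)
The plan is to exhibit a small generating family of compact projective objects in $\QCoh([X/G])$, identifying the category with presheaves of vector spaces on a small $\bK$-linear category $\cA$; the extension of \Cref{thm.EW} from algebras to ``rings with many objects'' then yields dualizability (with dual $\Fun(\cA, \Vect)$).

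Write $X = \Spec A$ and let $N \triangleleft G$ be a linearly reductive normal subgroup with $H := G/N$ a finite group scheme. For each finite-dimensional $N$-representation $V$, set $P_V := A \otimes \mathrm{Ind}_N^G V$. Since $\cO(H)$ is finite-dimensional, so is $\mathrm{Ind}_N^G V$, and Frobenius reciprocity gives
\[
  \Hom_{\QCoh([X/G])}(P_V, M) \;=\; \Hom_{\cat{Rep}(N)}(V, M|_N).
\]
Linear reductivity of $N$ makes $V$ projective in $\cat{Rep}(N)$, rendering the right-hand side exact in $M$; finite-dimensionality of $V$ makes it preserve filtered colimits. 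Hence $P_V$ is a compact projective object of $\QCoh([X/G])$. To verify that $\{P_V\}$ generates, note that for any $m \in M$ the $G$-subrepresentation of $M$ generated by $m$ is finite-dimensional; call it $W$. The counit $\mathrm{Ind}_N^G(W|_N) \twoheadrightarrow W$ is surjective (a standard consequence of $G/N$ being a finite group scheme), and composing with multiplication $A \otimes W \to M$ yields a map $P_{W|_N} \to M$ whose image contains $m$.

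Writing $\cA$ for the full $\bK$-linear subcategory of $\QCoh([X/G])$ on the $P_V$'s, the restricted Yoneda embedding provides an equivalence $\QCoh([X/G]) \simeq \Fun(\cA^\op, \Vect)$. The universal property of presheaves then supplies
\[
  \Fun(\cA^\op, \Vect) \,\boxtimes\, \Fun(\cA, \Vect) \;\simeq\; \Fun(\cA^\op \otimes \cA, \Vect) \;\simeq\; \End\bigl(\Fun(\cA^\op, \Vect)\bigr),
\]
so $\QCoh([X/G])$ is dualizable.

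The main obstacle is the surjectivity of $\mathrm{Ind}_N^G(W|_N) \twoheadrightarrow W$ for finite-dimensional $G$-representations $W$: for an ordinary finite group this is the classical averaging, but for a non-reduced finite group scheme $H$ in positive characteristic it requires a brief Hopf-algebraic argument using that the counit of $\cO(H)$ splits the unit. Once the generators are in hand, the passage from compact projective generation to dualizability is essentially a ``many-object'' version of \Cref{thm.EW}.
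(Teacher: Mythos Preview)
Your overall strategy coincides with the paper's: both arguments reduce to showing that $\QCoh([X/G])$ is spanned by tiny (= compact projective) objects, and then invoke the presheaf description of tinily-spanned categories (the paper's \Cref{cor.dualizability}, your ``many-object Eilenberg--Watts''). The paper packages the reduction via the $\cat{Rep}(G)$-module structure on $\QCoh([X/G])$ and \Cref{pr.tinygeneration}, observing that $\cO(X)$ is tiny over $\cat{Rep}(G)$; you do the same thing concretely with $P_V = A\otimes(\text{tiny object of }\cat{Rep}(G))$. The genuine difference is how tiny generators in $\cat{Rep}(G)$ are obtained: the paper quotes Donkin's theorem that virtual linear reductivity forces the injective envelope $E$ of $\bK$ to be finite-dimensional, whence $E^*\otimes V$ are tiny generators; you attempt to build them directly as $\mathrm{Ind}_N^G V$, bypassing Donkin.

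There is, however, a real gap. Your displayed ``Frobenius reciprocity'' asserts that $\mathrm{Ind}_N^G$ is \emph{left} adjoint to restriction, whereas the standard algebraic-group induction $\mathrm{Ind}_N^G V = (V\otimes\cO(G))^N$ is the \emph{right} adjoint; it yields $\Hom_G(W,\mathrm{Ind}_N^G V)\cong\Hom_N(W|_N,V)$, which makes $\mathrm{Ind}_N^G V$ injective, not projective. What you need is that restriction $\cat{Rep}(G)\to\cat{Rep}(N)$ is a Frobenius functor when $G/N$ is finite (equivalently, that restriction preserves products, not just colimits). This is true---ultimately because a finite-dimensional Hopf algebra such as $\cO(H)$ is a Frobenius algebra---but it is the actual crux, and your hint ``the counit of $\cO(H)$ splits the unit'' does not capture it (that identity holds for any coalgebra). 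In fact, once the left adjoint $L$ exists, the surjectivity of the counit $L(W|_N)\twoheadrightarrow W$ that you flag as the ``main obstacle'' is automatic from faithfulness of restriction. An easy fix that avoids the Frobenius-functor issue entirely: use $(\mathrm{Ind}_N^G V)^*$ in place of $\mathrm{Ind}_N^G V$. Since $N$ is linearly reductive, $\mathrm{Ind}_N^G$ is exact and lands in finite-dimensional injectives (here finiteness of $\cO(H)$ is used), and dualizing finite-dimensional objects swaps injectives and projectives; the resulting objects are then the desired tiny generators, and you have essentially reproved the relevant direction of Donkin's theorem.
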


\Cref{thm.coalgebras-dualizability,thm.projective,thm.stacks} are proved in \Cref{section.dualizability}.  (\Cref{section.pres} reviews some of the theory of locally presentable categories.)
In \cref{thm.stacks}, 
the condition on $G$ is important.  Indeed, according to~\cite{MR1395068}, corrected in~\cite{MR1683266}, $\QCoh([\Spec(\bK)/G]) \simeq \cat{Rep}(G) \simeq \cM^{\cO(G)}$ contains a non-zero projective if and only if $G$ is virtually linearly reductive, and moreover if any non-zero injective fails to be projective, then there are no non-zero projectives.  In particular, with \Cref{thm.coalgebras-dualizability} this implies that $\QCoh([\Spec(\bK)/G])$ is not dualizable for $G$ a semisimple group in positive characteristic, nor is it dualizable for $G = \bG_a = \Spec(\bK[x])$.  Such nondualizability results are in stark contrast with~\cite{MR2669705}, where it is shown that for many stacks of geometric interest, the corresponding \emph{derived} category of quasi-coherent sheaves \emph{is} dualizable (for the derived version of~$\boxtimes$).

We address reflexivity in \Cref{section.reflexivity}.  Our main result there is:

\begin{theorem}\label{thm.reflexivity-stronger}
  Suppose that $A = (\dots \twoheadrightarrow A_{2} \twoheadrightarrow A_{1} \twoheadrightarrow A_{0})$ is an $\bN$-indexed projective system of associative $\bK$-algebras.  Let $\cC$ denote the category of injective systems $(M_{0}\hookrightarrow M_{1}\hookrightarrow M_{2} \hookrightarrow \dots)$ where each $M_{i}$ is a right $A_{i}$-module and each inclusion $M_{i} \hookrightarrow M_{i+1}$ identifies $M_{i}$ as the maximal $A_{i}$-submodule of $M_{i+1}$.  Let $B$ be any associative $\bK$-algebras.  Then the ``double dual'' functor
  $$ \dd : \cC \boxtimes \cM_{B} \to \Hom(\cC^{*},\cM_{B}), $$
 which maps $X \boxtimes M$ to $(F \mapsto F(X) \otimes M)$, is an equivalence.
\end{theorem}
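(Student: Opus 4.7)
The plan is to recognize both $\cC \boxtimes \cM_B$ and $\Hom(\cC^*, \cM_B)$ as the same explicit category of injective systems of $(A_i \otimes B)$-modules, and then to verify that $\dd$ implements this identification.

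First I would produce a $\Pres$-colimit presentation of $\cC$. The restriction-of-scalars functors $r_i : \cM_{A_i} \to \cM_{A_{i+1}}$ along the surjections $A_{i+1} \twoheadrightarrow A_i$ are fully faithful and cocontinuous, with cocontinuous right adjoints given by the invariants $R_i : M \mapsto M^{I_{i+1,i}}$, where $I_{i+1,i} := \ker(A_{i+1} \to A_i)$. The defining maximality condition in the statement says exactly that objects of $\cC$ are compatible families in the $\cat{Cat}$-limit $\lim_i \cM_{A_i}$ along the $R_i$, which by the standard identification of $\Pres$-colimits along left adjoints with $\cat{Cat}$-limits along the corresponding right adjoints realizes $\cC$ as $\colim_i \cM_{A_i}$ in $\Pres$ along the $r_i$. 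Applying $-\boxtimes \cM_B$, which preserves $\Pres$-colimits, together with $\cM_{A_i} \boxtimes \cM_B \simeq \cM_{A_i \otimes B}$ from \Cref{thm.EW}, yields $\cC \boxtimes \cM_B \simeq \colim_i \cM_{A_i \otimes B}$ in $\Pres$. Rerunning the analysis then identifies this colimit with the category $\widetilde{\cC}_B$ of injective systems $(X_0 \hookrightarrow X_1 \hookrightarrow \cdots)$ of right $(A_i \otimes B)$-modules with $X_i = X_{i+1}^{I_{i+1,i} \otimes 1}$.

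Dually, $\cC^* = \Hom(\colim_i \cM_{A_i}, \Vect) \simeq \lim_i \cM_{A_i^{\op}}$ in $\Pres$, with transitions the Eilenberg--Watts duals of the $r_i$, namely the quotient functors $N \mapsto A_i \otimes_{A_{i+1}} N$ on left modules (each a cocontinuous localization with fully faithful right adjoint given by inflation). An object of $\cC^*$ is thus a compatible projective system of left modules. The central task is then to identify $\Hom(\cC^*, \cM_B)$ with $\widetilde{\cC}_B$: given a cocontinuous $G : \cC^* \to \cM_B$, one extracts an injective system $(X_i)$ by evaluating $G$ on the natural generators of $\cC^*$ coming from the $A_j$; conversely, each $(X_i) \in \widetilde{\cC}_B$ defines a cocontinuous functor via a formula of the shape $(N_j) \mapsto \colim_i (X_i \otimes_{A_i} N_i)$, where the transition maps in the colimit come from the inclusions $X_i \hookrightarrow X_{i+1}$ and the quotients $N_{i+1} \twoheadrightarrow N_i$ using the identification $X_i \otimes_{A_i} N_i \simeq X_i \otimes_{A_{i+1}} N_{i+1}$. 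After verifying mutual inverseness, a final check on generators of the form $\tilde{M} \boxtimes V$ (where $\tilde{M}$ is the image of $M \in \cM_{A_i}$ under the canonical $\cM_{A_i} \to \cC$) confirms that the composite $\cC \boxtimes \cM_B \simeq \widetilde{\cC}_B \simeq \Hom(\cC^*, \cM_B)$ agrees with $\dd$.

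The main obstacle will be this last identification. Whereas cocontinuous functors out of the $\Pres$-colimit $\cC$ decompose automatically into compatible families, cocontinuous functors out of the $\Pres$-limit $\cC^*$ admit no such automatic description: $\Hom(-, \cM_B)$ does not turn $\Pres$-limits into $\Pres$-colimits in general. The argument must exploit the specific localization structure of the tower presenting $\cC^*$---in particular, the fact that each transition is a cocontinuous quotient with a fully faithful right adjoint, making the tower $\bN$-indexed and hence well-behaved---to show that $\cC^*$ is generated by a controlled class of objects on which any cocontinuous $G$ is determined by Eilenberg--Watts data that naturally assembles into an injective system in $\widetilde{\cC}_B$.
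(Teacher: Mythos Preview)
Your architecture is exactly the paper's: identify $\cC\simeq\varinjlim_i\cM_{A_i}$ in $\Pres_\bK$, hence $\cC\boxtimes\cM_B\simeq\varinjlim_i\cM_{A_i\otimes B}$ (equivalently, the $\widehat A$-discrete $\widehat A\otimes B$-modules), identify $\cC^*\simeq\varprojlim_i{_{A_i}}\cM$, and then argue that $\Hom(\cC^*,\cM_B)$ is again this same category. The paper packages the last step as a single functor $\eta:F\mapsto F(A)$ into $\cM_{\widehat A\otimes B}$, where $A=\{A_i\}$ is the regular object of $\cC^*$, and shows $\eta$ is fully faithful with image the discrete modules; your ``extract $(X_i)$ from $G$'' and ``reconstruct $G$ from $(X_i)$ via $\varinjlim_i X_i\otimes_{A_i}(-)$'' are the two halves of that (the paper works with the single object $A$ rather than separate generators indexed by $j$).

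Where your proposal stops is precisely where the content lies, and you have not yet named the two ingredients the paper uses to fill the gap you correctly flag. First, for full faithfulness of $\eta$: the object $A$ is not compact in $\cC^*$, so ``evaluate on the generator'' does not formally determine a cocontinuous functor. The paper proves a lifting lemma (for a levelwise epimorphism $M\twoheadrightarrow N$ in $\cC^*$, any element of $\ker(M_j\to N_j)$ extends to a compatible system of kernel elements; this uses the $\bN$-indexing and surjectivity of the tower) which yields \emph{functorial} two-term resolutions $A^{\oplus S_1(N)}\to A^{\oplus S_0(N)}\to N\to 0$ in $\cC^*$. This is what lets $A$ behave as a generator: any cocontinuous $F$ is determined by $F(A)$ with its $\widehat A\cong\hom_{\cC^*}(A,A)$-action, and any $\widehat A$-linear $F(A)\to G(A)$ extends to a natural transformation. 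Second, for the essential image: one must show $F(A)$ is already $\widehat A$-discrete. The paper's argument is a noncompactness trick. If some $v\in F(A)$ were not killed by any $\ker(\widehat A\to A_i)$, pick $f_i\in\ker(\widehat A\to A_i)$ with $f_iv\neq 0$; the family $(f_i)_{i\in\bN}$ defines a \emph{single} morphism $f:A\to A^{\oplus\bN}$ in $\cC^*$ (precisely because $A$ is not compact, $\hom(A,A^{\oplus\bN})$ is strictly larger than $\widehat A^{\,\oplus\bN}$), and then $F(f)(v)\in F(A)^{\oplus\bN}$ has all components nonzero, which is absurd. Your outline does not close without these two arguments; with them, it is the paper's proof.
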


An immediate corollary establishes reflexivity for some categories that are not dualizable:

\begin{corollary} \label{thm.coalgebras-reflexivity}
  Let $C$ be a countable-dimensional 
  coassociative coalgebra.  Then $\cM^C$ is reflexive. 
  \end{corollary}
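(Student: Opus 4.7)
The plan is to realise $\cM^C$ as an instance of the category $\cC$ appearing in Theorem~\ref{thm.reflexivity-stronger}, and then apply that theorem with $B = \bK$.

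First, I would exploit countable-dimensionality of $C$ together with the fundamental theorem of coalgebras (every element lies in a finite-dimensional subcoalgebra) to write $C = \bigcup_{n\in\bN} C_n$ as an increasing union of finite-dimensional subcoalgebras. Taking opposite linear duals, $A_n := C_n^{*,\op}$, gives an $\bN$-indexed projective system of finite-dimensional associative $\bK$-algebras with surjective transition maps $A_{n+1}\twoheadrightarrow A_n$, exactly the input required by Theorem~\ref{thm.reflexivity-stronger}.

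Next, I would construct an equivalence $\cM^C \simeq \cC$. Given a right $C$-comodule $M$ with coaction $\rho$, set $M_n := \rho^{-1}(M\otimes C_n)$. Because every element of $M$ generates a finite-dimensional subcomodule, $M = \bigcup_n M_n$; each $M_n$ is naturally a right $C_n$-comodule, equivalently a right $A_n$-module. Under the finite-dimensional duality $A_n = C_n^{*,\op}$, the condition that $M_n$ be the \emph{maximal} $A_n$-submodule of $M_{n+1}$ (i.e.\ the subspace annihilated by $\ker(A_{n+1}\twoheadrightarrow A_n)$) translates precisely to the condition that $M_n$ consist of those $m \in M_{n+1}$ with $\rho(m)\in M\otimes C_n$, since $\ker(A_{n+1}\to A_n)$ is exactly the annihilator of $C_n$ inside $A_{n+1}$. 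Conversely, an object $(M_0\hookrightarrow M_1\hookrightarrow\dots)$ of $\cC$ yields $M := \colim_n M_n$ with a $C$-coaction assembled via the inclusions $C_n\hookrightarrow C$, and the maximality constraint ensures the constructions are mutually inverse. The main obstacle is the careful verification of this maximality/compatibility condition, establishing that the two constructions above extend to inverse equivalences of categories.

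Once the equivalence $\cM^C \simeq \cC$ is in hand, Theorem~\ref{thm.reflexivity-stronger} with $B = \bK$ (so $\cM_B = \Vect$) yields
\[
  \cM^C \;\simeq\; \cC \;\simeq\; \cC \boxtimes \Vect \;\xrightarrow{\ \dd\ }\; \Hom(\cC^*,\Vect) \;=\; (\cC^*)^*,
\]
and identifying this composite with the canonical double-dual functor of $\cM^C$ gives the desired reflexivity.
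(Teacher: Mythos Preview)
Your proposal is correct and follows essentially the same route as the paper: exhaust $C$ by finite-dimensional subcoalgebras $C_n$, set $A_n$ to be the dual algebras, identify $\cM^C$ with the category $\cC$ of \Cref{thm.reflexivity-stronger} (the paper defers this identification to \Cref{eg.coalgebra}, while you spell it out directly), and then apply that theorem with $B=\bK$. The only cosmetic difference is your choice $A_n=C_n^{*,\op}$ versus the paper's convention for the multiplication on $C_n^*$; either way one lands on the algebra for which right $C_n$-comodules are right $A_n$-modules.
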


We do not know if countable dimensionality can be dropped.

\begin{proof}
  By \cite[Theorem 2.2.1]{MR0252485}, there exists an increasing sequence of finite-dimensional subcoalgebras $C_{0}\subseteq C_{1} \subseteq \dots$ with $C = \bigcup C_{i}$.  Let $A_{i} = C_{i}^{*}$ and $\cC$ as in \cref{thm.reflexivity-stronger}.  Then $\cC \simeq \cM^{C}$ by \cref{eg.coalgebra}.   \Cref{thm.reflexivity-stronger} with $B = \bK$ completes the proof.
\end{proof}

We end by proving the following symmetric monoidal version of \Cref{thm.reflexivity-stronger}:

\begin{theorem} \label{thm.tensoriality}
  Affine ind-schemes indexed by countable posets are tensorial in the sense of~\cite{Brandenburg2011}.
\end{theorem}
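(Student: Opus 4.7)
The plan is to parallel the derivation of \Cref{thm.coalgebras-reflexivity} from \Cref{thm.reflexivity-stronger}, now tracking the symmetric monoidal structure. First, a countable filtered poset admits a cofinal $\bN$-indexed chain (pick an enumeration and inductively take upper bounds), so I may assume the ind-scheme is presented as $X = \colim_n \Spec(A_n)$ with the transition maps being surjections $A_{n+1} \twoheadrightarrow A_n$ of commutative $\bK$-algebras. I would then identify $\QCoh(X)$ with the category $\cC$ of \Cref{thm.reflexivity-stronger}: a quasi-coherent sheaf on the ind-scheme corresponds to an increasing sequence $M_0 \hookrightarrow M_1 \hookrightarrow \cdots$ with the maximality condition, carrying the natural symmetric monoidal structure coming from the tensor products over the $A_n$ (equivalently, that of continuous modules over the pro-ring $A = \lim A_n$).

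Tensoriality then asserts that, for every commutative $\bK$-algebra $B$, the natural functor
$$ X(B) \;=\; \colim_n \Hom_{\bK\text{-alg}}(A_n, B) \;\longrightarrow\; \Hom_{c\otimes}(\cC, \cM_B) $$
is an equivalence. The forward direction is an essentially routine extension: a point $\Spec(B) \to \Spec(A_n) \subseteq X$ yields a pullback $\cM_{A_n} \to \cM_B$, which extends uniquely to a cocontinuous symmetric monoidal functor out of $\cC$ by the universal property of $\cC$ as an appropriate filtered colimit of the $\cM_{A_n}$ in locally presentable tensor categories. For the converse, given a cocontinuous symmetric monoidal $F : \cC \to \cM_B$, I would restrict $F$ along the inclusions $\cM_{A_n} \hookrightarrow \cC$ and invoke classical tensoriality of affine schemes to produce a compatible system of $\bK$-algebra maps $f_n : A_n \to B$. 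A subtle point is that these inclusions are not themselves strong monoidal (they are pushforwards along closed embeddings), so one must transport the monoidal content of $F$ via the right-adjoint pullbacks $\cC \to \cM_{A_n}$, which are strong monoidal.

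The main obstacle, and where the countability of the indexing should play an essential role, is showing that the compatible system $(f_n)_n$ stabilizes, that is, factors through a single $A_N \to B$, so as to land in $\colim_n \Hom(A_n, B)$ rather than merely in the inverse limit. I expect to derive this stabilization from \Cref{thm.reflexivity-stronger} applied with test algebra $B$: that theorem describes $\cC \boxtimes \cM_B$ as $\Hom(\cC^*, \cM_B)$, which is exactly the control needed on the level-wise behaviour of $F$, and the unit-preservation constraint then forces stabilization. This parallels the way \Cref{thm.coalgebras-reflexivity} relies on countable dimensionality of the underlying coalgebra, and I would not expect the theorem to go through for uncountable indexing without substantial modification.
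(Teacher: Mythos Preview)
Your identification of $\QCoh(X)$ is wrong, and this derails the argument. For an ind-scheme $X=\varinjlim_n\Spec(A_n)$ the paper defines $\QCoh(X):=\varprojlim_n\cM_{A_n}$, the limit along the \emph{pullback} functors $M\mapsto A_n\otimes_{A_{n+1}}M$. Its objects are systems $\{M_n\}$ with \emph{surjective} transition maps $M_{n+1}\twoheadrightarrow M_n$ satisfying $M_n\cong A_n\otimes_{A_{n+1}}M_{n+1}$. This is exactly the category $\cM=\varprojlim_n({_{A_n}\cM})$ of \Cref{section.reflexivity}, i.e.\ $\cC^*$, not the category $\cC$ of \Cref{thm.reflexivity-stronger} whose objects are the increasing sequences you describe. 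Your later invocation of \Cref{thm.reflexivity-stronger} (``describes $\cC\boxtimes\cM_B$ as $\Hom(\cC^*,\cM_B)$'') is in fact the correct statement about $\Hom(\QCoh(X),\cM_B)$, but since you have placed $F$ in $\Hom(\cC,\cM_B)$ rather than $\Hom(\cC^*,\cM_B)$, it does not apply to your $F$. The attendant difficulty you flag---that the inclusions $\cM_{A_n}\hookrightarrow\cC$ are not monoidal and that one must somehow transport monoidal content through right adjoints going the wrong way---is a symptom of this misidentification, not a subtlety to be overcome.

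With the correct identification $\QCoh(X)=\cM=\cC^*$, the paper's proof is short and does not proceed via restrictions to the $\cM_{A_n}$ at all. One applies \Cref{pr.aux} directly: the functor $F\mapsto F(A)$ (evaluation at the regular object $A=\{A_n\}\in\cM$) identifies $\Hom(\cM,\cM_B)$ with the category of $\widehat A$-discrete $\widehat A\otimes B$-modules. A symmetric monoidal structure on $F$ forces $F(A)\cong B$ as $B$-modules, so the datum reduces to an algebra map $\widehat A\to B$; discreteness applied to $1\in B$ says precisely that this map factors through some $A_n$. Thus the ``stabilization'' you anticipate is not a separate obstacle---it is exactly the discreteness conclusion of \Cref{pr.aux}, and countability is consumed there (in \hyperref[newstep2]{Step~2} of its proof), not in an additional argument.
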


We will recall the definition of tensoriality in \Cref{defn.tensoriality}. Tensoriality of ind-schemes is not covered by~\cite{MR3144607,HallRydh2014}, where finite presentability is crucial.


\section{Recollections on locally presentable categories} \label{section.pres}

In this paper the word ``category'' always means ``locally small category'' unless otherwise specified: the objects might form a proper class, but the morphisms between any two objects should form a set. 

Given two categories $\cX$ and $\cY$, the collection $\Fun(\cX,\cY)$ might fail to be locally small if $\cX$ is not small.  We nevertheless want such collections to be the morphism spaces of a bicategory $\cat{Cat}$, and so we will use the term ``bicategory'' for bicategories whose morphism categories are not locally small.  If $\cX$ and $\cY$ are both $\bK$-linear, we let $\Fun_{\bK}(\cX,\cY)$ denote the collection of $\bK$-linear functors $\cX \to \cY$.

We begin by recalling a few basic facts about locally presentable $\bK$-linear categories.  The primary reference on locally presentable categories is~\cite{MR1294136}.  Many of the results in this section appear as early as~\cite{MR0327863}.

\begin{definition}\label{defn.lpcat}
  For a cardinal $\lambda$ (possibly finite), a partially ordered set $\cI$ is \define{$\lambda$-directed} if any subset of $\cI$ of cardinality strictly less than $\lambda$ has an upper bound.  A \define{$\lambda$-directed colimit} is a colimit indexed by a $\lambda$-directed partially ordered set.  An object $X$ in a category $\cC$ is \define{$\lambda$-little} if $\hom(X,-)$ commutes with $\lambda$-directed colimits, and \define{little} if it is $\lambda$-little for some $\lambda$.  A colimit is \define{$\lambda$-small} if its indexing diagram has strictly fewer than $\lambda$ arrows (including identity arrows).
  
  A set of objects $\Gamma$ in $\cC$ is \define{spanning} if every object of $\cC$ is a colimit of a diagram with objects in $\Gamma$.
  
  A $\bK$-linear category $\cC$ is \define{locally presentable} if it is cocomplete (i.e.\ has all colimits)
  and admits a spanning set consisting entirely of little objects. We may imagine $\cC$ as a ``categorified vector space over $\bK$.''
  
  Locally presentable $\bK$-linear categories are the objects of a bicategory $\Pres_\bK$ whose 1-morphisms are cocontinuous $\bK$-linear functors and whose 2-morphisms are natural transformations.  The special adjoint functor theorem implies that every cocontinuous functor $\cC \to \cD$ with $\cC$ locally presentable has a right adjoint (in the bicategory of all functors).  
\end{definition}

What we call ``$\lambda$-little'' is also called ``$\lambda$-compact'' or ``$\lambda$-presentable'' in the literature.  We will  use the term \define{compact} for $\aleph_0$-little, and \define{tiny} for $2$-little.
Note that for $\lambda$ infinite, the forgetful functor $\Vect \to \cat{Set}$ preserves $\lambda$-directed colimits, so there is no difference between $\cat{Set}$- and $\Vect$-valued homs for the purposes of defining notions like $\lambda$-little.  For $\lambda <\infty$, we use the $\Vect$-valued $\hom$ when deciding if an object is $\lambda$-little.
In particular, an object $X\in \cC$ is tiny if and only if $\hom(X,-) : \cC \to \Vect$ is cocontinuous.  Since arbitrary colimits are composed of direct sums, cokernels, and $\aleph_{0}$-directed colimits, and since direct sums are preserved by all linear functors, $X$ is tiny if and only if it is compact and  $\hom(X,-)$ is right exact.  In an abelian category, the latter is equivalent to $X$ being {projective}.

A set of objects $\Gamma$ in $\cC$ is \define{generating} if $\prod_{X\in \Gamma}\hom(X,-)$ is faithful.  In an abelian category, generating sets are spanning, but this may fail in other categories: the category of two-term filtered vector spaces $(V_{0}\hookrightarrow V_{1})$ is locally presentable, but $(\{0\} \hookrightarrow \bK )$ is generating but not spanning.

The fundamental theorem of locally presentable categories is:

\begin{proposition} \label{thm.fundamental}
  Let $\cC$ be a locally presentable $\bK$-linear category with a spanning set consisting of $\lambda$-little objects, for $\lambda$ a regular cardinal.  Then the full subcategory $\cC_{<\lambda} \subseteq \cC$ of $\lambda$-little objects is essentially small and closed in $\cC$ under $\lambda$-small colimits. Moreover, 

\begin{equation*}\label{eq.fundamental}
\begin{tikzpicture}[auto,baseline=(current  bounding  box.center),shorten >=3pt]
  \node (1) at (0,0) {$\cC$};
  \node[state,text width=4.3cm] (2) at (10,0) {$\bK$-linear functors \mbox{$\cC^\op_{<\lambda}\to \Vect$} that take $\lambda$-small limits to colimits};
  \draw[->] (1) to node{``Yoneda'' functor} node[below]{$Y\mapsto$ restriction of $\hom(-,Y)$ to $\cC_{<\lambda}$ } (2);
\end{tikzpicture}
\end{equation*}
is an equivalence, as is 
\begin{equation*}
\begin{tikzpicture}[auto,baseline=(current  bounding  box.center),shorten >=3pt,shorten <=2pt]
 \node[state,text width=3.7cm] (1) at (0,0) {cocontinuous $\bK$-linear functors $\cC\to\cD$};
 \node[state,text width=3.7cm] (2) at (10,0) {$\bK$-linear functors $\cC_{<\lambda}\to \cD$ preserving $\lambda$-small colimits};
 \draw[->] (1) to node{restriction to $\cC_{<\lambda}$} (2);
\end{tikzpicture}
\end{equation*}
for any cocomplete $\bK$-linear category $\cD$. 
\end{proposition}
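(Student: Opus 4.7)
The plan is to establish the four claims sequentially, relying throughout on the classical fact that $\lambda$-small limits commute with $\lambda$-directed colimits in $\Vect$. I would begin with closure of $\cC_{<\lambda}$ under $\lambda$-small colimits. If $X = \colim_{i \in I} X_i$ with $|I| < \lambda$ and each $X_i$ is $\lambda$-little, then for any $\lambda$-directed colimit $\colim_J Y_j$,
\[
\hom(X, \colim_J Y_j) = \lim_I \colim_J \hom(X_i, Y_j) = \colim_J \lim_I \hom(X_i, Y_j) = \colim_J \hom(X, Y_j),
\]
showing $X$ is $\lambda$-little. Essential smallness then follows: a spanning set $\Gamma$ of $\lambda$-little objects together with this closure expresses every object of $\cC$ as a $\lambda$-directed colimit of $\lambda$-small colimits of objects in $\Gamma$, so any $\lambda$-little object is a retract of some such $\lambda$-small colimit (its identity factors through one stage of the $\lambda$-directed colimit), and up to isomorphism there is only a set of these.

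For the Yoneda-type equivalence, the crucial input is that every $Y \in \cC$ is the canonical colimit of the comma category $(\cC_{<\lambda} \downarrow Y)$, which is $\lambda$-filtered because $\cC_{<\lambda}$ is closed under $\lambda$-small colimits. Fully faithfulness of $Y \mapsto \hom(-, Y)|_{\cC_{<\lambda}}$ follows from $\hom(Y, Z) = \lim_{(X \to Y)} \hom(X, Z)$ combined with the usual Yoneda manipulations. For essential surjectivity, given $F : \cC^{\op}_{<\lambda} \to \Vect$ satisfying the stated compatibility with $\lambda$-small (co)limits, I would present $F$ as a $\lambda$-filtered colimit of representables via its category of elements, take the corresponding colimit $Y$ in $\cC$, and verify $\hom(-, Y)|_{\cC_{<\lambda}} \simeq F$ using the hypothesis on $F$.

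The universal property follows dually: the inverse to restriction $\Hom(\cC, \cD) \to \Fun_{\bK}(\cC_{<\lambda}, \cD)$ (on the subcategory of functors preserving $\lambda$-small colimits) is left Kan extension along $\cC_{<\lambda} \hookrightarrow \cC$. For $F : \cC_{<\lambda} \to \cD$ preserving $\lambda$-small colimits, define $\bar F(Y) := \colim_{(X \to Y) \in (\cC_{<\lambda} \downarrow Y)} F(X)$. Restriction recovers $F$ because $X$ is terminal in $(\cC_{<\lambda} \downarrow X)$ for $X$ already $\lambda$-little. Cocontinuity of $\bar F$ uses that every colimit in $\cC$ decomposes into $\lambda$-directed colimits of $\lambda$-small ones: the $\lambda$-small pieces are handled by the hypothesis on $F$, and the $\lambda$-directed pieces by the canonical colimit presentation itself.

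The main obstacle will be the cofinality bookkeeping showing that $(\cC_{<\lambda} \downarrow Y)$ is genuinely $\lambda$-filtered and that the canonical cocone exhibits $Y$ as its colimit. This interlocks with closure of $\cC_{<\lambda}$ under $\lambda$-small colimits and is best established simultaneously; once in place, both the Yoneda equivalence and the universal property follow from standard density arguments.
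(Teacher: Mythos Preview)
Your outline is correct and follows the classical Gabriel--Ulmer density argument: close the spanning set under $\lambda$-small colimits, show the resulting comma categories $(\cC_{<\lambda}\downarrow Y)$ are $\lambda$-filtered, and deduce both the Yoneda equivalence and the universal property by Kan extension. The self-identified obstacle (the cofinality bookkeeping) is real but routine, and your retract argument for essential smallness is standard.

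The paper takes a rather different route. For infinite regular $\lambda$ it simply cites Ad\'amek--Rosick\'y (Theorem~1.46 and Proposition~1.45, with the correction from \cite{MR1699341}) and does not reprove anything. It then gives a direct argument only for the finite-$\lambda$ (i.e.\ tiny) case, and there it avoids comma categories entirely: taking $\Gamma$ to be a spanning set of tiny objects, it uses the enriched Yoneda lemma to identify $\Fun_\bK(\Gamma^{\op},\Vect)$ as the free $\bK$-linear cocompletion of $\Gamma$, produces a cocontinuous $\iota:\Fun_\bK(\Gamma^{\op},\Vect)\to\cC$ extending the inclusion, and checks that the Yoneda functor $\gamma:\cC\to\Fun_\bK(\Gamma^{\op},\Vect)$ satisfies $\gamma\iota\cong\mathrm{id}$ (hence essentially surjective) and is fully faithful by a direct colimit computation. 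Your approach buys a uniform, self-contained treatment across all regular $\lambda$; the paper's buys brevity by outsourcing the general case and, in the tiny case, sidestepping all the $\lambda$-filteredness verification since there are no nontrivial $\lambda$-small colimits to track.
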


Recall that a cardinal $\lambda$ is \define{regular} if it is not the union of strictly fewer than $\lambda$ sets, each of which has cardinality strictly less than $\lambda$.  See~\cite[Exercise 1.b]{MR1294136} for why one may assume $\lambda$ to be regular in \Cref{defn.lpcat}.

\begin{proof}
  The statement consists essentially of Theorem 1.46 and Proposition 1.45 from~\cite{MR1294136} (an error in the proof was corrected in~\cite{MR1699341}), with two changes: we present a $\bK$-linear version, and we allow $\lambda$ to be finite.  To incorporate linearity is straightforward: one can systematically develop a $\bK$-linear theory exactly parallel to the usual theory with no surprises, making only a few changes as necessary; see~\cite{MR648793}.  (Indeed, one can systematically develop a version for categories enriched in any closed symmetric monoidal locally presentable category, although at that level of generality one must use weighted colimits and not just colimits.)
  
  In this paper we will only need  \Cref{thm.fundamental} when $\lambda$ is finite.  
  The only finite regular cardinals are $0$, $1$, and $2$, and $0$-, $1$-, and $2$-little objects are the same: they are the {tiny} objects, i.e.\ the objects $X$ for which $\hom(X,-) : \cC \to \Vect$ is cocontinuous.  It follows from \cite[Remark 1.19]{MR1294136} that if $\cC$ is spanned by its tiny objects, then the full subcategory of tiny objects is essentially small.
  
  Let $\Gamma$ be any set of tiny objects in $\cC$, and abusing notation let $\Gamma$ also denote the full subcategory of $\cC$ on those objects.
    We consider the Yoneda functor $\gamma: \cC \to \Fun_{\bK}(\Gamma^{\op},\Vect)$ sending $Y \in \cC$ to the restriction of $\hom(-,Y)$ to $\Gamma$.  
  
  Since $\Gamma$ consists entirely of tiny objects and since colimits in $\Fun_{\bK}(\Gamma^{\op},\Vect)$ are computed pointwise, $\gamma$ is cocontinuous.  The enriched Yoneda Lemma~\cite{MR2177301} implies that $\Fun_{\bK}(\Gamma^{\op},\Vect)$ is the universal $\bK$-linear cocompletion of $\Gamma$: for any cocomplete $\bK$-linear category $\cD$, the restriction along the Yoneda embedding $\Gamma \hookrightarrow \Fun_{\bK}(\Gamma^{\op},\Vect)$ establishes an equivalence between $\Fun_{\bK}(\Gamma,\cD)$ and $\Hom\bigl(\Fun_{\bK}(\Gamma^{\op},\Vect),\cD\bigr)$.  Let $\iota : \Fun_{\bK}(\Gamma^{\op},\Vect) \to \cC$ denote the cocontinuous $\bK$-linear functor corresponding to the embedding $\Gamma \hookrightarrow \cC$.  Then $\gamma \circ \iota$ restricts to (something canonically isomorphic to) the identity on $\Gamma$, and hence (being cocontinuous and $\bK$-linear) is the identity on $\Fun_{\bK}(\Gamma^{\op},\Vect)$.  It follows that $\gamma$ is essentially surjective.
  
  We suppose now that $\Gamma$ is spanning.  For $Y$ and $Y'$ in $\cC$, we can therefore choose diagrams $\{X_{i}\}_{i\in \cI}$ and $\{X'_{j}\}_{j\in \cJ}$ in $\Gamma$ with $\colim_{i\in \cI} X_{i} \cong Y$ and $\colim_{j\in \cJ} X'_{j} \cong Y'$.  Then there are canonical isomorphisms
  \begin{equation*}
   \hom\bigl(Y,Y'\bigr) \cong \hom\Bigl( \colim_{i\in \cI} X_{i}, \colim_{j\in \cJ} X'_{j} \Bigr) \\ \cong \lim_{i\in \cI}\hom\Bigl(  X_{i}, \colim_{j\in \cJ} X'_{j} \Bigr) \cong \lim_{i\in \cI} \colim_{j\in \cJ}\hom\bigl(  X_{i}, X'_{j} \bigr). \end{equation*}
  In the last step, we use that every $X_{i}$ is tiny, allowing the colimit to commute with $\hom(X_{i},-)$.  On the other hand, since $\gamma$ is cocontinuous,
  \begin{multline*}
   \hom\bigl(\gamma(Y),\gamma(Y')\bigr) \cong \hom\Bigl( \colim_{i\in \cI} \gamma(X_{i}), \colim_{j\in \cJ} \gamma(X'_{j}) \Bigr) \\ \cong \lim_{i\in \cI} \colim_{j\in \cJ}\hom\bigl(  \gamma(X_{i}), \gamma(X'_{j}) \bigr) \cong \lim_{i\in \cI} \colim_{j\in \cJ}\hom\bigl(  X_{i}, X'_{j} \bigr). \end{multline*}
   as colimits in   $\Fun_{\bK}(\Gamma^{\op},\Vect)$ are computed pointwise and $\gamma(X) = \hom(-,X)$ for $X\in \Gamma$.  Thus $\gamma$ is full and faithful.
\end{proof}

 An important corollary of \cref{thm.fundamental} is that $\Hom_{\Pres_{\bK}}(\cC,\cD)$ is locally small, being equivalent to a full subcategory of the category of functors from a small category to a locally small category.

\begin{remark}
  A converse to the finite-$\lambda$ portion of \Cref{thm.fundamental} is the observation that, for any small $\bK$-linear category $\Gamma$, in $\Fun_{\bK}(\Gamma^\op,\Vect)$ every representable functor $\hom(-,X)$ for $X\in \Gamma$ is tiny, and the Yoneda Lemma implies that the representable functors are a spanning set.  Thus \Cref{thm.fundamental} identifies categories of the form $\Fun_{\bK}(\Gamma^{\op},\Vect)$ as precisely the tinily-spanned  $\bK$-linear categories.
\end{remark}

\begin{lemma}[{\cite[Exercise 2.n]{MR1294136}, \cite[Proposition 2.1.11]{MR3097055}}] \label{lemma.2limitsandcolimits}
  The bicategory $\Pres_\bK$ of locally presentable $\bK$-linear categories is closed under all small 2-limits and all small 2-colimits:  
\begin{itemize}
\item To compute a 2-limit in $\Pres_\bK$, simply compute the same 2-limit in the bicategory $\cat{Cat}$ of categories, ignoring that the arrows happen to be left adjoints.  
\item To compute a 2-colimit in $\Pres_\bK$, replace every 1-morphism by its right adjoint --- this is a contravariant bifunctor $\Pres_\bK \to \cat{Cat}$ --- and compute the corresponding 2-limit in $\cat{Cat}$, ignoring that the arrows happen to be right adjoints.\qedhere 
\end{itemize}
\end{lemma}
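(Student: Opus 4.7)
The statement has two parts, one for 2-limits and one for 2-colimits in $\Pres_\bK$. My plan is to establish the 2-limit case directly from the $\cat{Cat}$-level 2-limit, and then deduce the 2-colimit case by passing to right adjoints and reusing the 2-limit construction, since by the special adjoint functor theorem recalled in \Cref{defn.lpcat} every 1-morphism of $\Pres_\bK$ admits a right adjoint.

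For 2-limits, let $\{\cC_i\}_{i \in \cI}$ be a small diagram in $\Pres_\bK$ and form $\cL := \lim_i \cC_i$ in the bicategory $\cat{Cat}$ (whose objects are compatible families $(X_i)_i$). Three things must be checked: (i) $\cL$ is cocomplete, with projections cocontinuous; (ii) $\cL$ is locally presentable; and (iii) $\cL$ has the correct universal property in $\Pres_\bK$. Step (i) follows because each transition 1-morphism in the diagram is cocontinuous, so colimits in $\cL$ can be computed pointwise. Step (iii) is then largely formal: cocontinuous $\bK$-linear functors $\cD \to \cL$ correspond equivalently to compatible cones of cocontinuous $\bK$-linear functors $\cD \to \cC_i$, by the universal property of the $\cat{Cat}$-limit and the pointwise description of colimits. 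The substantive step is (ii), where I would pick a regular cardinal $\lambda$ large enough that every $\cC_i$ admits a spanning set of $\lambda$-little objects and every transition functor preserves $\lambda$-filtered colimits; such $\lambda$ exists because the diagram is small and accessibility is stable under enlarging the cardinal. Then I would verify that the pointwise-$\lambda$-little objects of $\cL$ form an essentially small spanning set, using \Cref{thm.fundamental} in each $\cC_i$ together with the fact that morphisms in $\cL$ are computed as limits of morphisms in the $\cC_i$.

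For 2-colimits, given a small diagram $\{\cC_i\}_{i \in \cI}$ in $\Pres_\bK$, I would replace every transition 1-morphism by its right adjoint, obtaining a $\cat{Cat}$-diagram indexed by $\cI^{\op}$ in which all morphisms are continuous (though no longer cocontinuous). Form its $\cat{Cat}$-level 2-limit $\cL'$; the plan is to equip $\cL'$ with the structure of a locally presentable $\bK$-linear category and verify that, so structured, it satisfies the universal property of the 2-colimit in $\Pres_\bK$. Cocontinuity of the structure maps $\cC_i \to \cL'$ follows from their being left adjoints to the projections $\cL' \to \cC_i$, with the adjunctions assembled pointwise. For the universal property, a cocontinuous $\bK$-linear functor $\cL' \to \cD$ corresponds via adjunction to a continuous $\bK$-linear functor $\cD \to \cL'$ satisfying the compatibilities coming from the limit structure, which by the 2-limit property decomposes into a compatible family of continuous functors $\cD \to \cC_i$; re-passing to left adjoints yields the required compatible cone of cocontinuous functors $\cC_i \to \cD$. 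Local presentability of $\cL'$ is verified in the same manner as for $\cL$, with $\lambda$ now chosen so that all the right adjoints in the diagram are $\lambda$-accessible.

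The main technical obstacle in both cases is this verification of local presentability, which reduces to producing a single regular cardinal $\lambda$ uniform over the diagram and checking that the pointwise-$\lambda$-little objects are essentially small and spanning. Once that is in place, the remainder of the argument is a diagrammatic comparison of universal properties across the adjoint correspondence and across the Yoneda presentation of \Cref{thm.fundamental}. Linearity introduces no new subtleties, since the $\Vect$-enriched theory runs in perfect parallel with the unenriched one, just as was noted in the proof of \Cref{thm.fundamental}; this is the source of the two cited references, which give the analogous result in the unenriched and $\infty$-categorical settings.
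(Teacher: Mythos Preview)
The paper does not prove this lemma; it is stated with citations to \cite[Exercise~2.n]{MR1294136} and \cite[Proposition~2.1.11]{MR3097055} and closed immediately with \qedhere. So there is no paper-side proof to compare against, and your proposal is effectively an attempt to reconstruct what those references contain.

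Your outline is correct in its architecture: compute 2-limits in $\cat{Cat}$ and verify the result lies in $\Pres_\bK$; obtain 2-colimits by passing to right adjoints and reusing the limit construction. The universal-property checks you sketch are essentially right, though in the colimit case you should be careful that the correspondence between cocontinuous $F:\cL'\to\cD$ and continuous $G:\cD\to\cL'$ is not a bijection with \emph{all} continuous functors but only with the accessible ones, and you must track accessibility through the decomposition.

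The genuine gap is in step~(ii), local presentability. Your proposed mechanism---choose $\lambda$ uniformly and show the pointwise-$\lambda$-little objects span---does not work as stated. First, an object $(X_i)_i$ with each $X_i$ $\lambda$-little need not be $\lambda$-little in $\cL$, because $\hom_\cL\bigl((X_i),-\bigr)$ is a limit over the indexing diagram $\cI$ of $\lambda$-filtered-colimit-preserving functors, and limits over $\cI$ commute with $\lambda$-filtered colimits only when $\cI$ is $\lambda$-small; so $\lambda$ must dominate the diagram, not just the categories and functors in it. Second, and more seriously, even granting that, it is not clear that such pointwise-little objects \emph{span}: each $X_i$ is a $\lambda$-filtered colimit of $\lambda$-little objects in $\cC_i$, but over indexing categories that vary with $i$ and need not assemble into a single diagram in $\cL$. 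The standard route through this (and what the cited references ultimately rely on) is the Makkai--Par\'e limit theorem for accessible categories: a small limit in $\cat{Cat}$ of accessible categories along accessible functors is again accessible, and if in addition the vertex categories are cocomplete and the limit is closed under colimits (as it is here, since colimits are pointwise), the limit is locally presentable. Invoking or reproving that theorem is the real content of the lemma; the rest of your argument is, as you say, a formal comparison of universal properties.
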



\begin{remark} \label{rem.lim}
We will be primarily interested in (2-)limits and (2-)colimits indexed by partially ordered sets $\cI$.  For a diagram $\cC : \cI \to \Pres_\bK$, $i \mapsto \cC_i$,  \Cref{lemma.2limitsandcolimits} says that $\varinjlim_{i \in \cI} \cC_i$ consists of  colimits of the form $\varinjlim_{i\in \cI} X_i$ for $X_i \in \cC_i$, and $\varprojlim \cC_i$ consists of  limits $\varprojlim X_i$, where to make sense of the map $X_i \to X_j$ for $i < j$ one must involve the functor $\cC_i \to \cC_j$.

One corollary of \cref{lemma.2limitsandcolimits} is that if $\cI$ is just a set then products and coproducts in $\Pres_\bK$ indexed by $\cI$ agree (and agree with the product, but not the coproduct, of underlying categories).  It thus makes sense to call this (co)product the \define{direct sum} $\bigoplus_{i\in \cI}\cC_i$.
\end{remark}

Just like vector spaces, in addition to a direct sum, locally presentable $\bK$-linear categories also admit a tensor product:

\begin{lemma}[{\cite[Exercise 1.l]{MR1294136}, \cite[Corollary 2.2.5]{MR3097055}}]\label{lemma.homtensoradjunction} \mbox{}
\begin{enumerate}
  \item  For $\cC,\cD \in \Pres_\bK$, the category $\Hom(\cC,\cD)$ is also locally presentable and $\bK$-linear.  

  \item  For each $\cC$, the bifunctor $\Hom(\cC,-) : \Pres_\bK \to \Pres_\bK$ has a left adjoint $(-)\boxtimes \cC$, making $\Pres_\bK$ into a closed symmetric monoidal bicategory.  

  \item The \define{tensor product} $\cC \boxtimes \cD$ satisfies the following universal property: For any  
  cocomplete $\bK$-linear category $\cE$, cocontinuous functors $\cC \boxtimes \cD \to \cE$ are the same as functors $\cC \times \cD \to \cE$ that are cocontinuous and $\bK$-linear in each variable, holding the other variable fixed. \qedhere   
\end{enumerate}
\end{lemma}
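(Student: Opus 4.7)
The plan is to reduce everything to \Cref{thm.fundamental}. Choose regular cardinals $\lambda$ and $\mu$ such that $\cC$ is spanned by $\lambda$-little objects and $\cD$ is spanned by $\mu$-little objects, and write $\Gamma := \cC_{<\lambda}$ and $\Delta := \cD_{<\mu}$ for the essentially small full subcategories of such objects. The whole proof then proceeds by presenting each of $\Hom(\cC,\cD)$ and $\cC\boxtimes\cD$ as a reflective subcategory of a Vect-valued (or $\cD$-valued) functor category on a small linear base, cut out by a small family of colimit-preservation conditions.

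For part (1), I would use \Cref{thm.fundamental} to identify $\Hom(\cC,\cD)$ with the full subcategory of $\Fun_{\bK}(\Gamma,\cD)$ on functors preserving $\lambda$-small colimits. The ambient $\Fun_{\bK}(\Gamma,\cD)$ is locally presentable because $\Gamma$ is small, $\cD$ is locally presentable, and limits and colimits in the functor category are computed pointwise. The requirement that a functor preserve each $\lambda$-small colimit is the requirement that a specific canonical comparison morphism be invertible; this is an ``orthogonality'' condition, and such conditions are known to carve out reflective locally presentable subcategories of a locally presentable category (see e.g.\ \cite[Theorem 1.39]{MR1294136}). Hence $\Hom(\cC,\cD)$ is locally presentable; $\bK$-linearity is immediate.

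For parts (2) and (3), I would define $\cC\boxtimes\cD$ as the full subcategory of $\Fun_{\bK}(\Gamma^{\op}\times\Delta^{\op},\Vect)$ consisting of bifunctors that take $\lambda$-small limits in the first variable and $\mu$-small limits in the second to colimits in $\Vect$. The same orthogonality argument shows this is locally presentable and $\bK$-linear. The canonical functor $\cC\times\cD\to\cC\boxtimes\cD$ sends a pair of little objects $(X,Y)\in\Gamma\times\Delta$ to $\hom(-,X)\otimes_{\bK}\hom(-,Y)$ and is extended by colimits in each variable. To verify the universal property in~(3), I would apply \Cref{thm.fundamental} one variable at a time: a cocontinuous $\bK$-bilinear functor $\cC\times\cD\to\cE$ is equivalent to a $\bK$-bilinear functor $\Gamma\times\Delta\to\cE$ preserving $\lambda$-small and $\mu$-small colimits separately in each variable. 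A two-variable Yoneda extension argument, mirroring the end of the proof of \Cref{thm.fundamental} but applied bivariantly, then identifies such bilinear functors with cocontinuous functors $\cC\boxtimes\cD\to\cE$.

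The hom-tensor adjunction (2) then follows formally: cocontinuous $\cC\boxtimes\cD\to\cE$ correspond by~(3) and the universal property of the product to cocontinuous $\cC\to\Hom(\cD,\cE)$; symmetry and associativity of $\boxtimes$ drop out of the manifest symmetry of the universal property once it is established. The main obstacle, I expect, is the bivariant version of the Yoneda extension argument: one must check that restriction along $\Gamma\times\Delta\hookrightarrow\cC\boxtimes\cD$ really is an equivalence onto the bi-cocontinuous functors, and in particular that every object of $\cC\boxtimes\cD$ can be recovered as a canonical colimit of bi-representables (so that any cocontinuous extension from $\Gamma\times\Delta$ is determined). Everything else is a routine application of the orthogonal-subcategory machinery of \cite[\S 1.C--D]{MR1294136}, adapted to the finite-regular-cardinal and $\bK$-linear setting exactly as in the proof of \Cref{thm.fundamental}.
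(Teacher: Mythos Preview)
The paper does not give a proof of this lemma: it is stated with citations to \cite[Exercise 1.l]{MR1294136} and \cite[Corollary 2.2.5]{MR3097055}, and the \texttt{\textbackslash qedhere} at the end of the enumeration signals that the result is taken as established in those references. So there is no ``paper's own proof'' to compare against.

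Your outline is essentially the argument one finds in the cited references (particularly \cite{MR3097055}), and it is sound. A couple of comments on the points you flag yourself. First, for the orthogonality argument in part~(1) to go through via \cite[Theorem 1.39]{MR1294136}, you need a \emph{set} of morphisms, not a proper class; this holds because $\Gamma$ is essentially small, so there are only set-many $\lambda$-small diagrams in $\Gamma$ up to isomorphism, hence only set-many comparison maps to invert. Second, the ``main obstacle'' you identify --- that every object of $\cC\boxtimes\cD$ is a canonical colimit of bi-representables --- is handled by observing that $\cC\boxtimes\cD$ is by construction a reflective localization of $\Fun_{\bK}(\Gamma^{\op}\otimes\Delta^{\op},\Vect)$, and the bi-representables span the latter; the reflector then exhibits every object of the localization as a colimit of (images of) bi-representables. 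The remark following the lemma in the paper gives an alternative concrete model, namely $\cC\boxtimes\cD \simeq$ continuous functors $\cC^{\op}\to\cD$, which you could also use to shortcut some of the bivariant Yoneda bookkeeping.
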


\begin{remark}
One can present $\cC \boxtimes \cD$ from part (3) of \Cref{lemma.homtensoradjunction} in a number of ways; for example, it is (up to canonical equivalence) the category of continuous functors $\cC^{\op} \to \cD$.
%
%
%
\end{remark}

\begin{definition}\label{defn.tensor_loc_pres}
We say that $\cS \in \Pres_\bK$ is a \define{symmetric monoidal locally presentable $\bK$-linear category} if it is symmetric monoidal as a category and the monoidal structure $\otimes : \cS \times \cS \to \cS$ is cocontinuous and $\bK$-linear in each variable, so that it extends to a 1-morphism $\cS \boxtimes \cS \to \cS$. 
In fact, $\cS$ is a commutative monoid in the symmetric monoidal bicategory $\Pres_\bK$.
Therefore, we may imagine $\cS$ as a ``categorified commutative algebra over $\bK$''.

We denote by $\Pres_{\otimes,\bK}$ the bicategory of symmetric monoidal locally presentable $\bK$-linear categories, where for two such objects $\cC,\cD$ the category $\Hom_{\otimes,c,\bK}(\cC,\cD)$ consists of cocontinuous  symmetric monoidal $\bK$-linear functors and symmetric monoidal natural transformations.  We will often simply write $\Hom_\otimes$ in place of $\Hom_{\otimes,c,\bK}$, as $\bK$ will often  be implicit and we will never use non-cocontinuous symmetric monoidal functors.

Let $\cS \in \Pres_{\otimes,\bK}$. An \define{$\cS$-module in $\Pres_\bK$} is a locally presentable $\bK$-linear category $\cC$ together with an \define{action} $(-)\triangleright (-): \cS \boxtimes \cC \to \cC$ and unit and associativity data making the appropriate triangles and pentagons commute.
\end{definition}

\begin{remark}\label{remark.module>enriched}
In the locally presentable setting, module categories are in particular enriched: For any object $M\in \cC$, the functor $(-)\triangleright M : \cS \to \cC$ is cocontinuous, hence has a right adjoint $\hom(M,-)_\cS : \cC \to \cS$.  It is straightforward to check that the associativity and unit data make $\hom(-,-)_\cS$ into the data of an $\cS$-enriched-category structure on~$\cC$.
\end{remark}

\begin{remark}\label{remark.basechange}
For a field extension $\bK \to \bL$, an $\bL$-linear locally presentable category is precisely a $\Vect_\bL$-module in $\Pres_\bK$. It follows that for any field extension $\bK\to\bL$ we have a 2-functor
\[
 \boxtimes \Vect_\bL:\Pres_\bK\to\Pres_\bL 
\] 
(tensor product is over $\Vect_\bK$) which preserves $\boxtimes$, i.e.\ it is a symmetric monoidal 2-functor. 
\end{remark}

The following enriched version of tinyness will be useful in the course of the proof of \Cref{thm.stacks}. 

\begin{definition}\label{defn.tinygeneration}
Let $\cS\in\Pres_{\otimes,\bK}$, and $\cC\in\Pres_\bK$ an $\cS$-module in the sense of \Cref{defn.tensor_loc_pres}.  
An object $M \in \cC$ is \define{tiny over $\cS$} if the enriched hom functor $\hom(M,-)_{\cS} : \cC \to \cS$ is cocontinuous. 
We say that $\cC$ is \define{tinily-spanned over $\cS$} if every object is an $\cS$-weighted colimit of tiny-over-$\cS$ objects. 
\end{definition}

\begin{proposition}\label{pr.tinygeneration}
If $\cS\in\Pres_{\otimes,\bK}$ is tinily-spanned over $\Vect$ and $\cC\in\Pres_\bK$ is an $\cS$-module which is tinily-spanned over $\cS$, then $\cC$ is also tinily-spanned over $\Vect$.
\end{proposition}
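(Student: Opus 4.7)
The plan is to exhibit, for each $X\in\cC$, a presentation of $X$ as an ordinary (i.e.\ $\Vect$-weighted) colimit of objects of the form $s\triangleright M$, where $M\in\cC$ is tiny over $\cS$ and $s\in\cS$ is tiny over $\Vect$, and then to show that each such $s\triangleright M$ is itself tiny over $\Vect$.

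First I would establish the key identity: for $s\in\cS$ and $M\in\cC$, combining the $\cS$-enriched adjunction $((-)\triangleright M)\dashv \hom(M,-)_\cS$ from \Cref{remark.module>enriched} with the ordinary adjunction that defines $\hom_\cS(s,-)$ gives
\[
 \hom_\cC(s\triangleright M,-)\;\cong\; \hom_\cS\bigl(s,\hom(M,-)_\cS\bigr)\;=\;\hom_\cS(s,-)\circ\hom(M,-)_\cS\;:\;\cC\to\Vect.
\]
If $M$ is tiny over $\cS$ then $\hom(M,-)_\cS$ is cocontinuous, and if $s$ is tiny over $\Vect$ in $\cS$ then $\hom_\cS(s,-)$ is cocontinuous; therefore the composite is cocontinuous and $s\triangleright M$ is tiny over $\Vect$.

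Next I would write an arbitrary $X\in\cC$ as an ordinary colimit of such building blocks. By hypothesis, $X$ is an $\cS$-weighted colimit of tiny-over-$\cS$ objects $\{M_i\}_{i\in\cI}$. Since $\cC$ has all (ordinary) colimits and the action $\triangleright$ is cocontinuous in each variable, the standard coend-as-coequalizer-of-coproducts-of-copowers presentation lets us rewrite this as an ordinary colimit in $\cC$ of objects of the form $W_i\triangleright M_i$ with $W_i\in\cS$. Then, using that $\cS$ is tinily-spanned over $\Vect$, for each $i$ write $W_i\cong\colim_{j\in\cJ_i} s_{ij}$ with $s_{ij}\in\cS$ tiny over $\Vect$. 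Cocontinuity of $(-)\triangleright M_i$ gives $W_i\triangleright M_i\cong\colim_{j}(s_{ij}\triangleright M_i)$. Assembling these two layers produces $X$ as an iterated, hence ordinary, colimit of tiny-over-$\Vect$ objects $s_{ij}\triangleright M_i$, which is exactly what tinily-spannedness of $\cC$ over $\Vect$ requires.

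The main technical point I would want to be careful with is the reduction of an $\cS$-weighted colimit to an ordinary colimit of copowers; this is well known for enriched categories whose underlying ordinary category is cocomplete and whose action is cocontinuous in each variable, but it is the one spot where one must actually invoke enriched category theory rather than work purely with ordinary colimits. Everything else is a routine chain of cocontinuity arguments using the adjunction of \Cref{remark.module>enriched}.
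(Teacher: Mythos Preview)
Your proof is correct and follows essentially the same approach as the paper: both use the adjunction identity $\hom_\cC(s\triangleright M,-)\cong\hom_\cS(s,\hom(M,-)_\cS)$ to show that $s\triangleright M$ is tiny over $\Vect$, and then argue spanning by unwinding weighted colimits. Your version is simply more explicit about the second step, spelling out the coend-as-coequalizer and the two layers of colimits, where the paper just writes ``an unpacking of the notion of weighted colimit.''
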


\begin{remark}\label{remark.tinygeneration}    
$\Vect$ can be replaced throughout by any symmetric monoidal locally presentable category, but we do not need this stronger version.
\end{remark}

\begin{proof}[Proof of \Cref{pr.tinygeneration}]
Let $\{s_i\}$ be a spanning set of tiny objects in $\cS$ and $\{c_j\}$ a set of tiny-over-$\cS$ objects in $\cC$ that spans $\cC$ over $\cS$. 

Recall the action $s\triangleright c$ of an object $s\in \cS$ on an object $c\in\cC$ implicit in the notion of $\cS$-module (\Cref{defn.tensor_loc_pres}). We have
\[
 \hom_\cC(s_i\triangleright c_j,-)\cong \hom_\cS(s_i,\hom_\cC(c_j,-)_\cS):\cC\to\Vect.
\] 
The right hand side is cocontinuous by assumption, so the left hand side is as well. Hence, $s_i\triangleright c_j$ are tiny. The fact that they span follows from the fact that $s_i$ and $c_j$ span and an unpacking of the notion of weighted colimit.  
\end{proof}

We may now introduce our main examples:

\begin{example}\label{eg.modules}
  Let $A$ be an associative algebra over $\bK$.  Then the category $\cM_A$ of all right $A$-modules is a locally presentable $\bK$-linear category.  The spanning set $\Gamma$ may be taken to consist of the rank-one free module $A$, which is tiny.  \Cref{thm.fundamental} implies that for any  cocomplete $\bK$-linear category $\cD$, cocontinuous $\bK$-linear functors $\cM_{A} \to \cD$ are equivalent to $\bK$-linear functors $\Gamma \to \cD$.  It then follows from \Cref{lemma.homtensoradjunction} that there is a canonical equivalence of categories $\Hom(\cM_A,\cD) \simeq {_A\cM} \boxtimes \cD$ for any $\cD \in \Pres_\bK$, where $_A\cM \cong \cM_{A^\op}$ is the category of left $A$-modules.  The Eilenberg--Watts theorem and its corollary \Cref{thm.EW} follow.
\end{example}

\begin{example}\label{eg.coalgebra}
  Let $C$ be a coassociative coalgebra over $\bK$.   By the so-called fundamental theorem of coalgebras (\cite[Theorem 2.2.1]{MR0252485}), $C \simeq \varinjlim_{i\in \cI} C_i$, where $\{C_i\}_{i\in \cI}$ is the partially ordered set of finite-dimensional sub-coalgebras of $C_i$.  Moreover, every right $C$-comodule $X$ is canonically a colimit $X = \varinjlim_{i\in \cI}X_i$ where $X_i$ is the largest subcomodule for which the coaction $X_i \to X_i \otimes C$ factors through $X_i \otimes C_i$.
  
  Give the linear dual $C_i^*$ the algebra structure $(\alpha \cdot \beta)(c) = \sum \alpha(c_{(2)}) \otimes \beta(c_{(1)})$, where the comultiplication on $C_i$ in Sweedler's notation is $c \mapsto c_{(1)} \otimes c_{(2)}$.  Then the category $\cM^{C_i}$ of right $C_i$-comodules is canonically isomorphic to the category $\cM_{C_i^*}$ of right $C_i^*$-modules, hence locally presentable by \Cref{eg.modules}.  The remarks in the previous paragraph then amount to the fact that
  $$ \cM^C \simeq \varinjlim_{i\in \cI}\cM^{C_i}, $$
  where the left-hand side denotes the category of all right $C$-comodules and the right-hand side denotes the colimit in $\Pres_\bK$.  In particular, $\cM^C$ is locally presentable.
\end{example}

\begin{example} \label{eg.scheme}
  Let $X$ be a scheme.  We can present $X$ as a colimit of affines: $X = \varinjlim_{i\in \cI} \Spec(A_i)$.  Let $\QCoh(X)$ denote the category of quasi-coherent sheaves of $\cO_X$-modules.  By definition:
  $$ \QCoh(X) \simeq \varprojlim_{i \in \cI^{\op}} \QCoh(\Spec(A_i)) \simeq \varprojlim_{i\in \cI^\op}{_{A_i}}\cM $$
  A priori, this limit is computed in $\cat{Cat}$.  But the pullback functors ${_{A_i}}\cM \to {_{A_j}}\cM$ involved are cocontinuous, and so by \Cref{lemma.2limitsandcolimits} it is also the limit in $\Pres_\bK$.  In particular, $\QCoh(X)$ is locally presentable.
  
  More generally, any Artin stack can be presented as a 2-colimit of affine schemes, and a similar argument applies.
\end{example}


\section{(Non)dualizability}\label{section.dualizability}

In this section we will prove \Cref{thm.coalgebras-dualizability,thm.projective,thm.stacks}. First, let us briefly unpack the notion of dualizability from \Cref{defn.dualizable}.

We will make use of the notion of adjunction between bicategories, referring the reader to \cite[Chapter 9]{Fio06} for background. The term used in that reference is ``biadjunction'', but we will simply say ``adjunction''. The following Lemma reproduces at the 2-categorical level facts that are essentially standard within symmetric monoidal 1-categories:

\begin{lemma}\label{pr.dualizable}
For a locally presentable $\bK$-linear category $\cC$ the following conditions are equivalent:
\begin{enumerate}
  \item $\cC$ is dualizable in the sense of \Cref{defn.dualizable}.
  \item For any $\cD\in\Pres_{\bK}$ the canonical functor $\can_{\cD} : \cD\boxtimes\cC^*\to\Hom(\cC,\cD)$ is an equivalence.  
  \item If $\ev:\cC^*\boxtimes\cC\to\Vect$ is the standard evaluation pairing, then 
    \begin{equation}\label{eq.counit_CC*}
      \begin{tikzpicture}[auto,baseline=(1.base)]
        \path[anchor=base] (0,0) node (1) {$(\bullet\boxtimes\cC^*)\boxtimes\cC\simeq \bullet\boxtimes(\cC^*\boxtimes\cC)$} +(4,0) node (2) {$\mathrm{id}$};
         \draw[->] (1) to node[]{$\scriptstyle -\boxtimes\ev$} (2);   
      \end{tikzpicture}
    \end{equation} 
is the counit of an adjunction between the 2-endofunctors $\bullet\boxtimes\cC$ and $\bullet\boxtimes\cC^*$ of $\Pres_{\bK}$. 
  \item There is a cocontinuous linear functor $\coev:\Vect\to\cC\boxtimes\cC^*$ (the \define{coevaluation}) such that the two compositions 
\begin{equation}\label{eq.dualizableC}
  \begin{tikzpicture}[baseline=(C.base),shorten >=1pt,auto,node distance=3cm,thick,connect/.style={circle,fill=blue!20,draw,font=\sffamily\small}]  
    \node (C) at (0,0) {$\cC$};

    \node (simeq1) at (.5,0) {$\scriptstyle \simeq$};

    \node (VectC1) at (1,.5) {$\Vect$};
    \node (VectC2) at (1,-.5) {$\cC$};

    \node (CC*C1) at (4,1) {$\cC^{\ }$};
    \node (CC*C2) at (4,0) {$\cC^*$};
    \node (CC*C3) at (4,-1) {$\cC^{\ }$};

    \node (CVect1) at (7,.5) {$\cC$};
    \node (CVect2) at (7,-.5) {$\Vect$};

    \node (simeq2) at (7.5,0) {$\scriptstyle \simeq$};

    \node (C_bis) at (8,0) {$\cC$};

    \node (CC*C_boxtimes_1) at (4,.5) {$\boxtimes^{\ }$};
    \node (CC*C_boxtimes_2) at (4,-.5) {$\boxtimes^{\ }$};

    \node (VectC_boxtimes) at (1,0) {$\boxtimes$};
    \node (CVect_boxtimes) at (7,0) {$\boxtimes$};

    \node[connect,label=110:$\scriptstyle \mathrm{coev}$] (coev) at (3,.5) {};
    \node[connect,label=-70:$\scriptstyle \mathrm{ev}$] (ev) at (4.8,-.5) {};
    
    \draw (VectC1) to[bend left=10] (coev);
    \draw[->] (ev) to[bend right=10] (CVect2);
    \draw[->] (VectC2) to[bend right=10] node[auto,swap] {$\scriptstyle\mathrm{id}$} (CC*C3);
    \draw[->] (CC*C1) to[bend left=10] node[auto] {$\scriptstyle\mathrm{id}$} (CVect1);
    \draw[->] (coev) to [bend left] (CC*C1);
    \draw[->] (coev) to [bend right] (CC*C2);
    \draw (CC*C2) to [bend left] (ev);
    \draw (CC*C3) to [bend right] (ev);
 \end{tikzpicture} 
\end{equation}
and
\begin{equation}\label{eq.dualizableC*}
 \begin{tikzpicture}[baseline=(C*.base),shorten >=1pt,auto,node distance=3cm,thick,connect/.style={circle,fill=blue!20,draw,font=\sffamily\small}]  
    \node (C*) at (0,0) {$\cC^*$};

    \node (simeq1) at (.5,0) {$\scriptstyle \simeq$};

    \node (C*Vect1) at (1,.5) {$\ \cC^*$};
    \node (C*Vect2) at (1,-.5) {$\Vect$};

    \node (C*CC*1) at (4,1) {$\cC^*$};
    \node (C*CC*2) at (4,0) {$\cC^{\ }$};
    \node (C*CC*3) at (4,-1) {$\cC^*$};

    \node (VectC*1) at (7,.5) {$\Vect$};
    \node (VectC*2) at (7,-.5) {$\ \cC^*$};

    \node (simeq2) at (7.5,0) {$\scriptstyle \simeq$};

    \node (C*_bis) at (8,0) {$\cC^*$};

    \node (C*CC*_boxtimes_1) at (4,.5) {$\boxtimes^{\ }$};
    \node (C*CC*_boxtimes_2) at (4,-.5) {$\boxtimes^{\ }$};

    \node (C*Vect_boxtimes) at (1,0) {$\boxtimes$};
    \node (VectC*_boxtimes) at (7,0) {$\boxtimes$};

    \node[connect,label=-110:$\scriptstyle \mathrm{coev}$] (coev) at (3,-.5) {};
    \node[connect,label=70:$\scriptstyle \mathrm{ev}$] (ev) at (4.8,.5) {};
    
    \draw (C*Vect2) to[bend right=10] (coev);
    \draw[->] (ev) to[bend left=10] (VectC*1);
    \draw[->] (C*Vect1) to[bend left=10] node[auto] {$\scriptstyle\mathrm{id}$} (CC*C1);
    \draw[->] (CC*C3) to[bend right=10] node[auto,swap] {$\scriptstyle\mathrm{id}$} (VectC*2);
    \draw[->] (coev) to [bend left] (CC*C2);
    \draw[->] (coev) to [bend right] (CC*C3);
    \draw (CC*C1) to [bend left] (ev);
    \draw (CC*C2) to [bend right] (ev);
 \end{tikzpicture}   
\end{equation}
are naturally isomorphic to the identity.
  \item The identity functor $\mathrm{id}_\cC$ is in the essential image of the canonical functor $\cC\boxtimes\cC^*\to\End(\cC)$.
  \item The 2-endofunctor $\boxtimes\cC$ of $\Pres_\bK$ has a right adjoint of the form $\boxtimes\cC'$ for some $\cC'\in\Pres_\bK$.
\end{enumerate}
\end{lemma}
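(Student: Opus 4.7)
The plan is to prove the cycle $(1) \Rightarrow (5) \Rightarrow (4) \Rightarrow (3) \Rightarrow (6) \Rightarrow (2) \Rightarrow (1)$, with the only non-formal step being $(5) \Rightarrow (4)$; the rest are standard consequences of the closed symmetric monoidal bicategory structure on $\Pres_\bK$ supplied by \Cref{lemma.homtensoradjunction}.

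I would first dispatch the formal implications. $(1) \Rightarrow (5)$ is immediate, since equivalences are essentially surjective, and $(2) \Rightarrow (1)$ comes from specializing $\cD = \cC$ and identifying the resulting $\can_\cC$ with the canonical functor $\cC \boxtimes \cC^* \to \End(\cC)$. For $(4) \Rightarrow (3)$, tensoring the triangle identities \cref{eq.dualizableC} and \cref{eq.dualizableC*} on the left with an arbitrary $\cD$ produces the counit $\cD \boxtimes \ev$ and a corresponding unit making $-\boxtimes \cC$ and $-\boxtimes \cC^*$ into an adjoint pair of 2-endofunctors. Then $(3) \Rightarrow (6)$ is just the observation that this adjunction exhibits $-\boxtimes \cC^*$ as a right adjoint to $-\boxtimes \cC$, and $(6) \Rightarrow (2)$ uses that $-\boxtimes \cC$ also has right adjoint $\Hom(\cC, -)$ by \Cref{lemma.homtensoradjunction}: uniqueness of adjoints forces $\Hom(\cC, -) \simeq -\boxtimes \cC'$ naturally, and evaluating at $\cD$ yields the required equivalence, which is canonically $\can_\cD$.

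The heart of the argument is $(5) \Rightarrow (4)$. I would choose $Z \in \cC \boxtimes \cC^*$ whose image under the canonical $\Psi : \cC \boxtimes \cC^* \to \End(\cC)$ is isomorphic to $\mathrm{id}_\cC$, and define $\coev : \Vect \to \cC \boxtimes \cC^*$ as the cocontinuous $\bK$-linear functor with $\coev(\bK) = Z$. Unwinding the definition of $\Psi$ via the hom-tensor adjunction, $\Psi(Z)$ is exactly the composite in \cref{eq.dualizableC}, so the first zigzag holds by hypothesis. For the second zigzag \cref{eq.dualizableC*}, let $U : \cC^* \to \cC^*$ denote the composite there; by the universal property of $\cC^* = \Hom(\cC, \Vect)$, we have $U \simeq \mathrm{id}_{\cC^*}$ if and only if $\ev \circ (U \boxtimes \cC) \simeq \ev$ as cocontinuous $\bK$-linear functors $\cC^* \boxtimes \cC \to \Vect$. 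The key manipulation is that the two composites $\ev \circ (\ev \boxtimes \cC^* \boxtimes \cC)$ and $\ev \circ (\cC^* \boxtimes \cC \boxtimes \ev)$ agree as functors $\cC^* \boxtimes \cC \boxtimes \cC^* \boxtimes \cC \to \Vect$, both being the unique cocontinuous $\bK$-linear extension of $(F, c, F', c') \mapsto F(c) F'(c')$. Re-parenthesizing $\ev \circ (U \boxtimes \cC)$ via this equality and then invoking the first zigzag reduces it to $\ev \circ (\cC^* \boxtimes \mathrm{id}_\cC) = \ev$, as desired.

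The main obstacle is this last step: in an arbitrary symmetric monoidal bicategory, one zigzag identity need not force the other. The argument succeeds here because $\cC^*$ is realized specifically as the internal hom into the tensor unit $\Vect$, which lets us test the second zigzag against $\ev$ and reduce it to a manipulation of a four-variable pairing valued in $\Vect$, where the symmetric monoidal structure allows the two evaluations to be commuted past one another.
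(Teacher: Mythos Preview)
Your proof is correct and follows essentially the same route as the paper: the same cycle of implications with $(5)\Rightarrow(4)$ as the only substantive step, handled by the same trick of testing the second zigzag against $\ev$ and reducing to the first via the symmetry of the four-fold pairing. The only difference is organizational---the paper proves $(3)\Rightarrow(2)$ directly and treats $(3)\Leftrightarrow(6)$ on the side, whereas you route $(3)\Rightarrow(6)\Rightarrow(2)$---and in your $(6)\Rightarrow(2)$ you elide the verification that the equivalence produced by uniqueness of adjoints is specifically $\can_\cD$, which the paper checks by matching counits.
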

\begin{proof}
{\bf (1) $\Rightarrow$ (5)}  This is immediate.

{\bf (5) $\Rightarrow$ (4)} An object $x\in\cC\boxtimes\cC^*$ that maps onto $\mathrm{id}_\cC$ through $\cC\boxtimes\cC^*\to\End(\cC)$ induces a left adjoint $\coev:\Vect\to\cC\boxtimes\cC^*$, $\bK^{\oplus\alpha} \mapsto x^{\oplus\alpha}$. We claim that as the name suggests, $\coev$ is a coevaluation in the sense of (4). In order to verify this, we have to show that the functors \Cref{eq.dualizableC,eq.dualizableC*} are (naturally isomorphic to) identities. 

For \Cref{eq.dualizableC} this is simply an unpacking of the fact that the image of $x$ in $\End(\cC)$ is the identity. Indeed, the right-hand half of \Cref{eq.dualizableC} is simply the $\cC$-valued pairing of $\cC\boxtimes\cC^*$ with $\cC$ obtained by first mapping the former into $\End(\cC)$ and then evaluating $\cC$-endofunctors at a given object in $\cC$.

The verification is almost as simple for \Cref{eq.dualizableC*}. The desired isomorphism can be tested against $\cC$ by pairing via $\ev$; in other words, it is enough to show that the composition $\ev\circ(\Cref{eq.dualizableC*}\boxtimes\mathrm{id}_{\cC})$ is naturally isomorphic to $\ev:\cC^*\boxtimes\cC\to\Vect$. A diagram chase shows that this composition is isomorphic to $\ev\circ(\mathrm{id}_{\cC^*}\boxtimes\Cref{eq.dualizableC})$, which in turn is isomorphic to $\ev$ because $\Cref{eq.dualizableC}\cong\mathrm{id}_{\cC}$.

{\bf (4) $\Rightarrow$ (3)} For every $\cD,\cE\in\Pres_{\bK}$, the functors $\ev$ and $\coev$ induce functors
\begin{equation*}\label{eq.CC*_adjunction}
  \begin{tikzpicture}[auto,baseline=(current  bounding  box.center)]
        \path[anchor=base] (0,0) node (1) {$\Hom(\cD\boxtimes\cC,\cE)$} +(5,0) node (2) {$\Hom(\cD,\cE\boxtimes\cC^*),$};
         \draw[->] (1) to[bend left=10] node {$R_{\cD,\cE}$} (2);
         \draw[->] (2) to[bend left=10] node {$L_{\cD,\cE}$} (1);
  \end{tikzpicture}
\end{equation*}
natural in $\cD$ and $\cE$ in the obvious sense. The functor $R_{\cD,\cE}$, for instance, sends $F:\cD\boxtimes\cC\to\cE$ to  
\begin{equation*}
 \begin{tikzpicture}[baseline=(D.base),shorten >=1pt,auto,node distance=3cm,thick,connect/.style={circle,fill=blue!20,draw,font=\sffamily\small}]  
    \node (D) at (0,0) {$\cD$};

    \node (simeq1) at (.5,0) {$\scriptstyle \simeq$};

    \node (DVect1) at (1,.5) {$\cD$};
    \node (DVect2) at (1,-.5) {$\Vect$};

    \node (DCC*1) at (4,1) {$\cD^{\ }$};
    \node (DCC*2) at (4,0) {$\cC^{\ }$};
    \node (DCC*3) at (4,-1) {$\cC^*$};

    \node (EC*1) at (7,.5) {$\cE$};
    \node (EC*2) at (7,-.5) {$\ \cC^*$};

    \node (DCC*_boxtimes_1) at (4,.5) {$\boxtimes^{\ }$};
    \node (DCC*_boxtimes_2) at (4,-.5) {$\boxtimes^{\ }$};

    \node (DVect_boxtimes) at (1,0) {$\boxtimes$};
    \node (EC*_boxtimes) at (7,0) {$\boxtimes$};

    \node[connect,label=-110:$\scriptstyle \text{coev}$] (coev) at (3,-.5) {};
    \node[connect,label=70:$\scriptstyle F$] (F) at (4.8,.5) {};
    
    \draw (DVect2) to[bend right=10] (coev);
    \draw[->] (F) to[bend left=10] (EC*1);
    \draw[->] (DVect1) to[bend left=10] node {$\scriptstyle\mathrm{id}$} (DCC*1);
    \draw[->] (DCC*3) to[bend right=10] node[swap] {$\scriptstyle\mathrm{id}$} (EC*2);
    \draw[->] (coev) to [bend left] (DCC*2);
    \draw[->] (coev) to [bend right] (DCC*3);
    \draw (DCC*1) to [bend left] (F);
    \draw (DCC*2) to [bend right] (F);
 \end{tikzpicture}   ,
\end{equation*}
while $L_{\cD,\cE}$ is defined similarly using $\ev$.

Now, the conditions in (4) imply that $R_{\cD,\cE}$ and $L_{\cD,\cE}$ are mutually inverse, so in particular each $R_{\cD,\cE}$ is an equivalence. Collectively, the $R_{\cD,\cE}$ implement an adjunction between $\bullet\boxtimes\cC$ and $\bullet\boxtimes\cC^*$ (where the former is the left adjoint) as in \cite[Definition 9.8]{Fio06}. The identification of the counit with $\ev$ as in \Cref{eq.counit_CC*} is now easy.

{\bf (3) $\Rightarrow$ (2)} By definition $\Hom(\cC,\bullet)$ is a right adjoint to $\bullet\boxtimes\cC$. By the uniqueness of adjoints between bicategories (e.g.\ \cite[Theorem 9.20]{Fio06}), we can find an equivalence $\eta_\cD:\cD\boxtimes\cC^*\simeq\Hom(\cC,\cD)$, natural in $\cD$, that intertwines the counits of the two adjunctions: the diagram
\begin{equation*}
  \begin{tikzpicture}[,baseline=(current  bounding  box.center),anchor=base,cross line/.style={preaction={draw=white,-,line width=6pt}}]
    \path (0,0) node (1) {$\cD\boxtimes\cC^*\boxtimes \cC$} +(2,1.5) node (2) {$\Hom(\cC,\cD)\boxtimes\cC $} +(4,0) node (3) {$\cD$} +(2,.6) node {$\Downarrow$}; 
    \draw[->] (1) to node[pos=.4,auto] {$\scriptstyle \eta_\cD\boxtimes\mathrm{id}_\cC$} (2);
    \draw[->] (1) to node[auto,swap] {$\scriptstyle \mathrm{id}_\cD\boxtimes\ev$} (3);
    \draw[->] (2) to (3);
  \end{tikzpicture}
\end{equation*} 
commutes up to natural isomorphism for every $\cD\in\Pres_{\bK}$. Moreover, we get a similar commutative diagram if we substitute $\can_\cD$ for $\eta_\cD$. But then, by the universality of the counit of an adjunction (the so-called biuniversality of \cite[Definition 9.4]{Fio06}, or rather its dual), the two functors $\eta_\cD$ and $\can_\cD$ are naturally isomorphic; see e.g.\ \cite[Lemma 9.7]{Fio06}.

{\bf (2) $\Rightarrow$ (1)}  Indeed, \Cref{defn.dualizable} is the particular instance of (2) obtained by taking $\cD=\cC$.

{\bf (3) $\Rightarrow$ (6)} Simply set $\cC'=\cC^*$.

{\bf (6) $\Rightarrow$ (3)} The equivalence
\[  \cC'\simeq \Hom(\Vect,\cC')\simeq \Hom(\cC,\Vect) \]
resulting from the adjunction identifies $\cC'$ with $\cC^*$ in such a way that the counit $\cC'\boxtimes\cC\to\Vect$ gets identified with the evaluation $\ev:\cC^*\boxtimes\cC\to\Vect$.   
\end{proof}

\begin{remark}\label{remark.basechange_preserves_dualizability}
Condition (6) in \Cref{pr.dualizable} makes it clear that the base change 2-functor $\boxtimes\Vect_\bL:\Pres_\bK\to\Pres_\bL$ from \Cref{remark.basechange} preserves dualizability. Indeed, $\boxtimes\left(\cC'\boxtimes\Vect_\bL\right)$ is right adjoint to $\boxtimes\left(\cC\boxtimes\Vect_\bL\right)$ on $\Pres_\bL$ whenever $\boxtimes\cC'$ is right adjoint to $\boxtimes\cC$ on $\Pres_\bK$. 
\end{remark}

\begin{remark} \label{rem.direct_sum_dual}
Let $(\cC_i)_{i \in \cI}$ be a family of locally presentable $\bK$-linear categories. Then its direct sum $\bigoplus_{i \in I} \cC_i$ is dualizable. This follows from $(1) \Leftrightarrow (2)$ in \Cref{pr.dualizable} and \Cref{rem.lim}.
\end{remark}

For future use, we note the following consequence of \Cref{pr.dualizable}.

\begin{corollary}\label{cor.dualizable_alt_criterion}
Suppose that $\iota:\cD\to\cC$ is a cocontinuous $\bK$-linear functor between locally presentable $\bK$-linear categories, and $\pi:\cC\to \cD$ a  cocontinuous $\bK$-linear functor with $\pi\iota\cong \mathrm{id}_{\cD}$. 
If $\cC$ is dualizable, then so is $\cD$.
\end{corollary}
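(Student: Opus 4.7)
The plan is to invoke condition (5) of \Cref{pr.dualizable}: $\cC$ is dualizable iff $\mathrm{id}_{\cC}$ lies in the essential image of the canonical functor $\mu_{\cC}:\cC\boxtimes\cC^{*}\to\End(\cC)$. The strategy is to transport a witness $x\in\cC\boxtimes\cC^{*}$ for $\mathrm{id}_{\cC}$ across the retraction to a witness in $\cD\boxtimes\cD^{*}$ for $\mathrm{id}_{\cD}$.

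First, I would set up the relevant functoriality. The functor $\iota:\cD\to\cC$ induces by precomposition a functor $\iota^{*}:\cC^{*}\to\cD^{*}$, $F\mapsto F\circ\iota$; this is cocontinuous because colimits in both $\cC^{*}=\Hom(\cC,\Vect)$ and $\cD^{*}=\Hom(\cD,\Vect)$ are computed pointwise. Combined with $\pi$, this yields a morphism $\pi\boxtimes\iota^{*}:\cC\boxtimes\cC^{*}\to\cD\boxtimes\cD^{*}$ in $\Pres_{\bK}$. Parallel to it, define $\Psi:\End(\cC)\to\End(\cD)$ by $T\mapsto\pi\circ T\circ\iota$.

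Next, I would verify that the square
\[
 \mu_{\cD}\circ(\pi\boxtimes\iota^{*})\;\cong\;\Psi\circ\mu_{\cC}
\]
commutes up to natural isomorphism. By the universal property of $\boxtimes$ (\Cref{lemma.homtensoradjunction}), it suffices to check agreement on objects of the form $c\boxtimes F$, where both composites send $d\in\cD$ to $F(\iota(d))\otimes\pi(c)$, using that $\pi$ is $\bK$-linear and cocontinuous, hence commutes with the $\Vect$-action. Both composites are cocontinuous in each of $c$ and $F$, so the isomorphism extends to all of $\cC\boxtimes\cC^{*}$.

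Finally, choosing $x\in\cC\boxtimes\cC^{*}$ with $\mu_{\cC}(x)\cong\mathrm{id}_{\cC}$ (using dualizability of $\cC$ and (5) of \Cref{pr.dualizable}), the commutativity of the square gives
\[
 \mu_{\cD}\bigl((\pi\boxtimes\iota^{*})(x)\bigr)\;\cong\;\Psi(\mathrm{id}_{\cC})\;=\;\pi\circ\iota\;\cong\;\mathrm{id}_{\cD},
\]
so $\mathrm{id}_{\cD}$ lies in the essential image of $\mu_{\cD}$. Another application of (5) of \Cref{pr.dualizable} then concludes that $\cD$ is dualizable. The only real content is the commutativity of the square, and even there no obstacle arises: it is a routine consequence of the universal property of $\boxtimes$ together with the observation that $\pi$ commutes with the $\Vect$-action.
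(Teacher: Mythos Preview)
Your proof is correct and follows essentially the same strategy as the paper: exhibit $\mathrm{id}_{\cD}$ in the essential image of $\cD\boxtimes\cD^{*}\to\End(\cD)$ by chasing a witness across a commutative square, then invoke condition~(5) of \Cref{pr.dualizable}. The paper uses the slightly different square
\[
\begin{tikzpicture}[auto]
  \path[anchor=base] (0,0) node (DD*) {$\cD\boxtimes\cD^*$} (3,0) node (D>D) {$\Hom(\cD,\cD)$} (0,1.5) node (DC*) {$\cD\boxtimes\cC^*$} (3,1.5) node(C>D) {$\Hom(\cC,\cD)$};
   \draw[->] (DC*) -- node[swap] {$\scriptstyle \mathrm{id}_{\cD}\boxtimes \iota^{*}$} (DD*); \draw[->] (C>D) -- node {$\scriptstyle \iota^{*}$} (D>D);
   \draw[->] (DC*.mid -| DC*.east) -- node {$\simeq$} (C>D.mid -| C>D.west);
   \draw[->] (DD*.mid -| DD*.east) -- (D>D.mid -| D>D.west);
\end{tikzpicture}
\]
with the top row an equivalence by condition~(2), and sends $\pi\in\Hom(\cC,\cD)$ down to $\pi\iota\cong\mathrm{id}_{\cD}$; but this is only a cosmetic variation on your argument.
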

\begin{proof}
This follows from the commutative diagram 
\begin{equation*}
  \begin{tikzpicture}[auto]
    \path[anchor=base] (0,0) node (DD*) {$\cD\boxtimes\cD^*$} (3,0) node (D>D) {$\Hom(\cD,\cD)$} (0,1.5) node (DC*) {$\cD\boxtimes\cC^*$} (3,1.5) node(C>D) {$\Hom(\cC,\cD)$};
     \draw[->] (DC*) -- node[swap] {$\scriptstyle \mathrm{id}_{\cD}\boxtimes \iota^{*}$} (DD*); \draw[->] (C>D) -- node {$\scriptstyle \iota^{*}$} (D>D);
     \draw[->] (DC*.mid -| DC*.east) -- node {$\simeq$} (C>D.mid -| C>D.west);
     \draw[->] (DD*.mid -| DD*.east) -- node {} (D>D.mid -| D>D.west);
  \end{tikzpicture}
\end{equation*}
where the vertical arrows are both given by restriction along $\iota:\cD\to\cC$.

If $\cC$ is dualizable, then by \Cref{pr.dualizable} the upper horizontal functor is an equivalence. The hypotheses imply that $\mathrm{id}_{\cD}$ is in the essential image of the right hand vertical arrow (e.g.\ it is the image of $\pi\in\Hom(\cC,\cD)$), and hence also in the image of the lower horizontal functor. But then the equivalence between (1) and (5) of \Cref{pr.dualizable} applies to prove dualizability. 
\end{proof}

We may now begin establishing certain categories as either dualizable or not.  First,
\Cref{eg.modules} generalizes immediately to all tinily-spanned categories:

\begin{lemma}\label{cor.dualizability}
  Suppose that a locally presentable $\bK$-linear category $\cC$ has a spanning set $\Gamma$ consisting entirely of tiny objects.  Then $\cC$ is dualizable.
\end{lemma}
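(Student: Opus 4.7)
The plan is to verify condition~(5) of \Cref{pr.dualizable}: that $\mathrm{id}_\cC$ lies in the essential image of the canonical functor $\cC \boxtimes \cC^* \to \End(\cC)$. As a preliminary I would invoke \Cref{thm.fundamental} with $\lambda = 2$. Letting $\Gamma := \cC_{<2}$ denote the essentially small full subcategory of tiny objects, the Yoneda functor gives an equivalence $\cC \simeq \Fun_\bK(\Gamma^\op,\Vect)$ under which $X\in\Gamma$ is represented by $h_X := \hom_\cC(-,X)|_\Gamma$. Applying the second equivalence of the same theorem with $\cD = \Vect$ then identifies $\cC^* = \Hom(\cC,\Vect) \simeq \Fun_\bK(\Gamma,\Vect)$, under which $X\in\Gamma$ corresponds to the ``evaluation at $X$'' functor $\widehat{X} := \hom_\cC(X,-) \in \cC^*$.

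The canonical functor $\cC \boxtimes \cC^* \to \End(\cC)$ is by construction the cocontinuous extension of the bilinear pairing $(Y,F) \mapsto \bigl(W \mapsto F(W) \otimes Y\bigr)$, where $F(W)\otimes Y$ denotes the copower of $Y\in\cC$ by $F(W)\in\Vect$. I would then exhibit the explicit witness
\[
\Theta \;:=\; \int^{X \in \Gamma} X \boxtimes \widehat{X} \;\in\; \cC \boxtimes \cC^*,
\]
which exists since $\Gamma$ is essentially small and $\cC \boxtimes \cC^*$ is cocomplete. As the canonical functor is cocontinuous, $\Theta$ is sent to the endofunctor $W \mapsto \int^{X \in \Gamma} \hom_\cC(X,W) \otimes X$ of $\cC$.

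The final step is to identify this endofunctor with $\mathrm{id}_\cC$. Under $\cC \simeq \Fun_\bK(\Gamma^\op,\Vect)$ the enriched Yoneda lemma gives $\hom_\cC(X,W) = W(X)$, and the density (co-Yoneda) isomorphism $W \simeq \int^{X \in \Gamma} W(X) \otimes h_X$ follows formally from \Cref{thm.fundamental}: both sides define cocontinuous endofunctors of $\cC$ that agree on $\Gamma$, hence are naturally isomorphic. Consequently $\Theta \mapsto \mathrm{id}_\cC$, and (5)$\Rightarrow$(1) of \Cref{pr.dualizable} delivers dualizability. I do not expect a substantive obstacle; the only nontrivial ingredient is the density identity, which is essentially a repackaging of the universal property already established in \Cref{thm.fundamental}.
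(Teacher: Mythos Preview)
Your argument is correct. Both you and the paper begin with the Yoneda identification $\cC\simeq\Fun_\bK(\Gamma^\op,\Vect)$ and $\cC^*\simeq\Fun_\bK(\Gamma,\Vect)$ from \Cref{thm.fundamental}, but you then diverge: you verify condition~(5) of \Cref{pr.dualizable} by exhibiting the explicit coend $\Theta=\int^{X\in\Gamma}X\boxtimes\widehat X$ and invoking co-Yoneda density to see that it maps to $\mathrm{id}_\cC$, whereas the paper verifies the stronger condition~(2) by chaining the equivalences $\Hom(\cC,\cD)\simeq\Fun_\bK(\Gamma,\cD)\simeq\Fun_\bK(\Gamma,\Vect)\boxtimes\cD\simeq\cC^*\boxtimes\cD$ and checking that the composite is the canonical functor. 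Your route is more concrete---it actually names the coevaluation object---while the paper's route yields more, namely the equivalence $\can_\cD$ for \emph{every} $\cD$, not just the case $\cD=\cC$. The two are essentially dual ways to cash in the same universal property of $\Fun_\bK(\Gamma^\op,\Vect)$ as the free cocompletion of $\Gamma$.
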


\begin{proof}
  As in the proof of \Cref{thm.fundamental}, the Yoneda functor establishes an equivalence of categories $\cC \simeq \Fun_{\bK}(\Gamma^{\op},\Vect)$.  Its dual is then $\cC^* = \Hom(\cC,\Vect) \simeq \Fun_{\bK}(\Gamma,\Vect)$, where the pairing $\cC \boxtimes \cC^* \to \Vect$ is computed as a coend over $\Gamma$, and $$\Hom(\Fun_{\bK}(\Gamma^\op,\Vect),\cD) \simeq \Fun_{\bK}(\Gamma,\cD) \simeq \Fun_{\bK}(\Gamma,\Vect)\boxtimes \cD.$$  The composition is nothing but the functor $\Hom(\Fun_{\bK}(\Gamma^\op,\Vect),\cD) \leftarrow \Fun_{\bK}(\Gamma^\op,\Vect)^*\boxtimes \cD$ induced by the pairing.  By taking $\cD = \cC$ we see that $\cC \simeq \Fun_{\bK}(\Gamma^\op,\Vect)$ is dualizable.
\end{proof}

\begin{remark}  
  We know of no dualizable locally presentable category not of this type, and conjecture that dualizability of a locally presentable $\bK$-linear category implies that the category is tinily-spanned.  \Cref{thm.coalgebras-dualizability} implies that there are no counterexamples to this conjecture among categories of the form $\cM^C$ for $C$ a coassociative coalgebra.
\end{remark}

\Cref{thm.stacks} is now a simple consequence of \Cref{pr.tinygeneration}.

\begin{proof}[Proof of \Cref{thm.stacks}]
Let $\cO(X)$ be the $\bK$-algebra of regular functions on $X$. It carries an action of $G$ that is compatible with the multiplication; i.e.\ $\cO(X)$ makes sense as an algebra object in $\cat{Rep}(G) = \cM^{\cO(G)}$, the symmetric monoidal category of comodules over the Hopf algebra $\cO(G)$ of regular functions on $G$. 

Since $X$ is affine, $\QCoh([X/G])$ is the category of $\cO(X)$-modules with compatible $G$-action, which is to say it is the category of $\cO(X)$-module-objects in the symmetric monoidal category $\cat{Rep}(G)$. In particular, $\QCoh([X/G])$ is a $\cat{Rep}(G)$-module in the sense of \Cref{defn.tensor_loc_pres}. 

We can now apply \Cref{pr.tinygeneration} to the present situation, where we let $\cC=\QCoh([X/G])$ and $\cS=\cat{Rep}(G)$, to conclude that $\cC$ is tinily-spanned over $\Vect$: 

First, $\cO(X)$ is a tiny-over-$\cat{Rep}(G)$ spanning-over-$\cat{Rep}(G)$ object in $\QCoh([X/G])$. Secondly, $\cat{Rep}(G)$ is tinily-spanned over $\Vect$ essentially by \cite{MR1395068}. One result that is part of the main theorem of that paper is that being virtually linearly reductive is equivalent to the monoidal unit $\bK$ in $\cat{Rep}(G)$ having finite-dimensional injective envelope $E$. It is easy to see then that $E^*$ is projective in $\cat{Rep}(G)$ (and hence tiny because it is also finite-dimensional). 

For every finite-dimensional object $V\in\cat{Rep}(G)$ the endofunctor $-\otimes V$ of $\cat{Rep}(G)$ is left adjoint to the exact functor $-\otimes V^*$, and hence preserves projectivity. Since $\bK$ is surjected upon by $E^*$, $V$ is surjected upon by the tiny object $E^*\otimes V$. Since objects in the comodule category $\cat{Rep}(G)$ are unions of finite-dimensional subobjects, the conclusion that $\cat{Rep}(G)$ is tinily-spanned follows.      

Finally, dualizability of $\QCoh([X/G])$ is a consequence of \Cref{cor.dualizability}.
\end{proof}

We turn now to our main categories of interest, namely $\cM^C$ for $C$ a coassociative coalgebra.  Since $\cM^C$ is abelian and in abelian categories generating sets are spanning, \Cref{cor.dualizability} implies that $\cM^C$ is dualizable if it is generated by its compact projectives.  This, of course, fails in general, as the following well-known example illustrates:

\begin{example}
Consider the coalgebra $\bK[x]$ with comultiplication $x^n \mapsto \sum_{i=0}^n x^i \otimes x^{n-i}$.  We claim that in the category $\cM^{\bK[x]}$, there are no nonzero projectives.  Indeed, suppose that $V \in \cM^{\bK[x]}$ is not the zero object.  It suffices to witness a surjection $W \twoheadrightarrow V$ that does not split.  
  
  The finite-dimensional subcoalgebras of the $\bK[x]$ are dual to the algebras $\bK[t]/(t^n)$; thus $\cM^{\bK[x]}$ is the category whose objects are vector spaces $V$ equipped with a locally nilpotent endomorphism~$t$.  In particular, $\ker t$ is not zero if $V$ is not zero.  Split $V$ as a vector space as $V = \bar V \oplus \ker t$.  The action of $t$ then has form:
  $$ t = \begin{pmatrix} \bar t & 0 \\ \tau & 0 \end{pmatrix} $$
  for some $\bar t : \bar V \to \bar V$ and $\tau : \bar V \to \ker t$. We then define $W = V \oplus \ker t = \bar V \oplus \ker t \oplus \ker t$, and give it the locally nilpotent endomorphism
  $$ t = \begin{pmatrix} \bar t & 0 & 0 \\
    \tau & 0 & 0 \\
    0 & \mathrm{id}_{\ker t} & 0 \end{pmatrix}. $$
    The map $W \twoheadrightarrow V$ kills the second copy of $\ker t$.  Any splitting will take $v\in \ker t$ to some element $w\in W$ for which $t(w) = (0,0,v) \in \bar V \oplus \ker t \oplus \ker t$.  Thus no splitting is $t$-linear, and $V$ is not projective.
    
    Note that $\bK[x]$ is the Hopf algebra of functions on the (non-reductive) additive group $\mathbb{G}_a$.  Thus $\cM^{\bK[x]} = \QCoh\bigl((\Spec \bK)/\bG_{a}\bigr)$, and so \Cref{thm.stacks} fails without the virtual linear reducibility requirement.
\end{example}

\begin{lemma}\label{lemma.essentialimage}
  Let $C$ be a coassociative coalgebra and $\{C_i\}_{i\in \cI}$ the partially ordered set of finite-dimensional sub-coalgebras of $C$. Let $\cD$ be a locally presentable linear category. The canonical functor $\cM^C\boxtimes\cD^*  \to \Hom(\cD,\cM^C)$ is fully faithful. Its essential image consists of those functors $F$ occurring as colimits $F = \varinjlim_i F_i$ where $F_i : \cD \to \cM^C$ factors through the inclusion $\cM^{C_i} \hookrightarrow \cM^C$.
\end{lemma}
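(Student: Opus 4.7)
My plan is to leverage the filtered decomposition $\cM^C\simeq\varinjlim_{i\in\cI}\cM^{C_i}$ from \cref{eg.coalgebra} together with dualizability of each finite-dimensional piece. Since $-\boxtimes\cD^*$ is left adjoint to $\Hom(\cD^*,-)$ by \cref{lemma.homtensoradjunction}, it is cocontinuous, which yields
\[
\cM^C\boxtimes\cD^*\simeq\varinjlim_{i\in\cI}\bigl(\cM^{C_i}\boxtimes\cD^*\bigr)
\]
in $\Pres_\bK$.

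Next I would identify each factor $\cM^{C_i}\boxtimes\cD^*$ with $\Hom(\cD,\cM^{C_i})$. The equivalence $\cM^{C_i}\simeq\cM_{C_i^*}$ from \cref{eg.coalgebra} exhibits $C_i^*$ as a tiny spanning object of $\cM^{C_i}$ (it is finite-dimensional, hence projective and compact as the regular module over the finite-dimensional algebra $C_i^*$), so $\cM^{C_i}$ is dualizable by \cref{cor.dualizability}. Because $\cD$ is not assumed dualizable, condition~(2) of \cref{pr.dualizable} does not directly produce the identification with $\cC=\cD$; but applying (2) to the (itself dualizable) category $(\cM^{C_i})^*$ and then combining with the hom-tensor adjunction and the reflexivity $(\cM^{C_i})^{**}\simeq\cM^{C_i}$ gives
\[
\cM^{C_i}\boxtimes\cD^* \simeq \Hom\bigl((\cM^{C_i})^*,\cD^*\bigr) \simeq \Hom\bigl(\cD,(\cM^{C_i})^{**}\bigr) \simeq \Hom(\cD,\cM^{C_i}).
\]

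Tracing the canonical functor of the lemma through these identifications, it becomes the natural comparison
\[
\varinjlim_i\Hom(\cD,\cM^{C_i})\to\Hom(\cD,\cM^C)
\]
induced by post-composition with the fully faithful inclusions $\iota_i:\cM^{C_i}\hookrightarrow\cM^C$. Fully faithfulness would follow because each $(\iota_i)_*$ is fully faithful and the system is directed. For the essential image, I would use the description of $\Pres_\bK$-colimits from \cref{lemma.2limitsandcolimits,rem.lim} to identify the colimit with the smallest full subcategory of $\Hom(\cD,\cM^C)$ closed under small colimits and containing each $\Hom(\cD,\cM^{C_i})$, which is exactly the class of functors expressible as $\varinjlim_i F_i$ with each $F_i$ factoring through some $\cM^{C_i}$. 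I expect the most delicate step to be the manipulation in the second paragraph --- identifying $\cM^{C_i}\boxtimes\cD^*$ with $\Hom(\cD,\cM^{C_i})$ via the detour through $(\cM^{C_i})^*$, since $\cD$ is not assumed dualizable --- while the bookkeeping for the $\Pres_\bK$-colimit is routine once one has fully faithfulness of the inclusions and the filtered indexing.
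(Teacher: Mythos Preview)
Your proposal is correct and matches the paper's approach essentially step for step: the paper also writes $\cM^C\boxtimes\cD^*\simeq\varinjlim_i(\cM^{C_i}\boxtimes\cD^*)$ by cocontinuity of $\boxtimes$, then identifies each term with $\Hom(\cD,\cM^{C_i})$ via the same detour through $(\cM^{C_i})^*$ and hom--tensor adjunction (its chain is $\cM^{C_i}\boxtimes\cD^*\simeq\Hom((\cM^{C_i})^*,\cD^*)\simeq\Hom((\cM^{C_i})^*\boxtimes\cD,\Vect)\simeq\Hom(\cD,(\cM^{C_i})^{**})\simeq\Hom(\cD,\cM^{C_i})$, which is your chain with the middle transposition unfolded), and finally invokes \cref{lemma.2limitsandcolimits,rem.lim} for the essential-image description. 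The step you flag as most delicate is exactly the one the paper carries out, so your concern there is unwarranted.
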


\begin{proof}
  The hom-tensor adjunction of \Cref{lemma.homtensoradjunction} implies $\boxtimes$ distributes over colimits.  Thus there are canonical equivalences:
  $$ \cM^C\boxtimes\cD^*  \simeq (\varinjlim_i \cM^{C_i})\boxtimes\cD^* \simeq \varinjlim_i \bigl( \cM^{C_i}\boxtimes\cD^*  \bigr) \simeq \varinjlim_i \Hom(\cD,\cM^{C_i})$$
  In the last step we used dualizability of $\cM^{C_i} \cong \cM_{C_i^*}$ to imply that the canonical functor from $\cM^{C_i}\boxtimes\cD^* $ to $\Hom(\cD,\cM^{C_i})$ induced by the pairing is an equivalence: by \cref{pr.dualizable} part (3) and the hom-tensor adunction, 
  \begin{multline*}
    \cM^{C_{i}}\boxtimes \cD^{*} = 
    \cM^{C_{i}}\boxtimes \Hom(\cD,\Vect) \simeq \Hom\bigl( (\cM^{C_{i}})^{*},  \Hom(\cD,\Vect)\bigr) \\ \simeq \Hom\bigl( (\cM^{C_{i}})^{*} \boxtimes \cD, \Vect \bigr)  \simeq \Hom\bigl( \cD,\Hom\bigl((\cM^{C_{i}})^{*},\Vect\bigr)\bigr) \simeq \Hom\bigl(\cD, \cM^{C_{i}}\bigr) .
  \end{multline*}
  
    Since the pairing-induced functor $\cC\boxtimes\cD^*  \to \Hom(\cD,\cC)$ is natural in $\cC$, we conclude that the inclusion
  $$ \varinjlim_i \Hom(\cD,\cM^{C_i}) \simeq  \cM^C\boxtimes\cD^*  \to \Hom(\cD,\cM^C) $$
  is the one induced by the inclusions $\cM^{C_i} \hookrightarrow \cM^C$.  \Cref{lemma.2limitsandcolimits,rem.lim}  it complete the proof.
\end{proof}

We are now equipped to prove \Cref{thm.coalgebras-dualizability}, which asserts that $\cM^C$ is dualizable if and only if it has enough projectives.

\begin{proof}[Proof of \Cref{thm.coalgebras-dualizability}]
  According to~\cite{MR0498663}, the category $\cM^C$ has enough projectives if and only if every finite-dimensional right $C$-comodule has a projective cover; inspection of the proof reveals the attested projective cover to be finite-dimensional.  The finite-dimensional right $C$-comodules are precisely the compact ones, and so we see that $\cM^C$ has enough projectives if and only if it is generated by its compact projective objects. \Cref{cor.dualizability} then implies one direction of the claim.
  
  Suppose now that $\cM^C$ does not have enough projectives.  Then, again by~\cite{MR0498663}, there exists a simple left $C$-comodule $S$ such that the injective hull of $S$ is infinite-dimensional.  It follows that we can find essential extensions $S \hookrightarrow T$ with $\dim T$ finite but arbitrarily large (as otherwise there would be a maximal such $T$, which would therefore be injective).  Recall that an extension $S \hookrightarrow T$ is \define{essential} if any nonzero subobject of $T$ intersects $S$ nontrivially; the dual notion (for abelian categories) is an \define{essential projection} $Q \twoheadrightarrow P$, which is a surjection for which every proper subobject of $Q$ fails to surject onto $P$.  Thus, by dualization, we have found a simple right $C$-comodule $S^* \in \cM^C$ with essential projections $T^* \twoheadrightarrow S^*$ of arbitrarily large dimension.
  
  Let $F\in \Hom(\cM^C,\cM^C)$ be in the essential image of $(\cM^C)^*\boxtimes \cM^C$.  By \Cref{lemma.essentialimage}, $F = \varinjlim F_i$, where $F_i$ is the largest subfunctor of $F$ factoring through $\cM^{C_i}$.  We will prove that $F \not\cong \operatorname{id}_{\cM^C}$.  To do so, consider an arbitrary natural transformation $\theta : F \to \operatorname{id}_{\cM^C}$, or, what is equivalent, a system of natural transformations $\theta_i : F_i \to \operatorname{id}_{\cM^C}$. 
  Since $ \varinjlim (F_i(X)) =  (\varinjlim F_i)(X)$ for all $X \in \cM^C$, it suffices to prove that $\theta_i(S^*) : F_i(S^*) \to S^*$ vanishes for all sufficiently large $i$ and for $S^*$ the simple with arbitrarily large essential surjections from the previous paragraph.  Since $\dim(S^*) < \infty$, for all sufficiently large $i$ we have $S^* \in \cM^{C_i}$, and it suffices to consider just these.
  
  Thus fix $i\in \cI$ with $S^* \in \cM^{C_i} \cong \cM_{C_i^*}$.  Since $\dim(C_i^*) < \infty$, there are bounds on the dimensions of essential surjections onto $S^*$ in $\cM^{C_i}$: a projective cover  is as large as you can get.  We can therefore choose $T^*$ mapping essentially onto $S^*$ so large that $T^* \not\in \cM^{C_i}$.   Consider the following commutative diagram:
  $$ \begin{tikzpicture}[auto]
  \path[anchor=base] (0,0) node (FS) {$F_i(S^*)$} (0,1.5) node (FT) {$F_i(T^*)$} (3,0) node (S) {$S^*$} (3,1.5) node(T) {$T^*$};
  \draw[->>] (FT) -- (FS); \draw[->>] (T) -- (S);
  \draw[->] (FT.mid -| FT.east) -- node {$\theta_i(T^*)$} (T.mid -| T.west);
  \draw[->] (FS.mid -| FS.east) -- node {$\theta_i(S^*)$} (S.mid -| S.west);
  \end{tikzpicture} $$
  The left arrow is a surjection because $F_i$ is cocontinuous.   Since $F_i(T^*) \in \cM^{C_i}$ but $T^* \not\in \cM^{C_i}$, the image of $\theta_i(T^*)$ must be a proper sub-co-module of $T^*$.  (Indeed, it is within the largest $C_i$-subcomodule of $T^*$.)  Since the surjection $T^* \to S^*$ is essential, the composition $F_i(T^*) \to S^*$ cannot be a surjection; since $S^*$ is simple, the composition must vanish.  But $F_i(T^*) \to F_i(S^*)$ is a surjection; hence $\theta_i(S^*) = 0$.
\end{proof}

A similar argument works for $\QCoh(X)$ when $X$ is a projective scheme over $\bK$, and provides the basis of the proof of 
\Cref{thm.projective}. Recall that $X$ is a \define{projective scheme} if it is embeddable as a closed subscheme into some projective space $\bP^N_\bK$.

First, we specialize \Cref{cor.dualizable_alt_criterion} as follows.

\begin{corollary}\label{cor.closed_subscheme}
If a $\bK$-scheme $X$ is such that $\QCoh(X)$ is dualizable, then the same is true of all closed subschemes $i:Y\subseteq X$. 
\end{corollary}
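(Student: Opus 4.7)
The plan is to apply \Cref{cor.dualizable_alt_criterion} directly, with $\cC = \QCoh(X)$ and $\cD = \QCoh(Y)$. Given a closed immersion $i:Y\hookrightarrow X$, I would take $\iota = i_*:\QCoh(Y)\to\QCoh(X)$ and $\pi = i^*:\QCoh(X)\to\QCoh(Y)$.

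First I would verify that both $i_*$ and $i^*$ are cocontinuous $\bK$-linear functors between locally presentable $\bK$-linear categories. $\bK$-linearity is clear in both cases. For $i^*$, cocontinuity is automatic since it is a left adjoint (to $i_*$). For $i_*$, I would use that a closed immersion is an affine morphism: Zariski-locally on $X$, the functor $i_*$ is restriction of scalars along a surjective ring map $A\twoheadrightarrow A/I$. Such a restriction-of-scalars functor is exact and preserves arbitrary direct sums (in fact all small colimits), hence is cocontinuous; and cocontinuity may be checked locally on $X$.

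Next I would check the splitting condition $i^*\iota_* = i^*i_* \cong \mathrm{id}_{\QCoh(Y)}$. This is a standard fact about closed immersions: Zariski-locally, it amounts to the canonical isomorphism $(A/I)\otimes_A M \cong M$ for any $A/I$-module $M$, viewed as an $A$-module through the surjection $A\twoheadrightarrow A/I$.

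With both conditions verified, \Cref{cor.dualizable_alt_criterion} immediately yields dualizability of $\QCoh(Y)$ from that of $\QCoh(X)$. The argument is essentially formal once cocontinuity of $i_*$ is in hand; that is the one step that genuinely uses the hypothesis that $i$ is a \emph{closed} immersion (rather than merely a monomorphism), since general monomorphisms of schemes need not be affine and $i_*$ need not preserve colimits.
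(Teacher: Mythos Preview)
Your proposal is correct and follows exactly the same route as the paper: apply \Cref{cor.dualizable_alt_criterion} with $\cC=\QCoh(X)$, $\cD=\QCoh(Y)$, $\iota=i_*$, and $\pi=i^*$. The paper's proof is a one-liner that leaves the verification of cocontinuity of $i_*$ and of $i^*i_*\cong\mathrm{id}$ implicit, whereas you have spelled these out; otherwise the arguments are identical.
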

\begin{proof}
Indeed, setting $\cC=\QCoh(X)$, $\cD=\QCoh(Y)$, $\pi=i^*$ and $\iota=i_*$ places us within the scope of \Cref{cor.dualizable_alt_criterion}.
\end{proof}

\begin{proof}[Proof of \Cref{thm.projective}]
If $\QCoh(X)$ is dualizable over $\Vect_\bK$ then, by \Cref{remark.basechange_preserves_dualizability}, for any field extension $\bK\to\bL$, $\QCoh(X\times\Spec(\bL))\simeq\QCoh(X)\boxtimes\Vect_\bL$ is dualizable over $\Vect_\bL$. Consequently, we may as well assume $\bK$ is algebraically closed. By \Cref{cor.closed_subscheme} we may also assume that $X$ itself is integral and projective.
 
Any embedding of $X$ into some projective space provides a spanning set of objects $\cO(n) \in \QCoh(X)$ such that  there are no non-zero maps  $\cO(m)\to \cO(n)$ for $m>n$. This simply says that $\cO(-n)$ has no (non-zero) sections if $n>0$; in fact, if there were such sections, then similarly $\cO(-kn) \cong \cO(-n)^{\otimes k}$ would have non-zero sections for any $k>0$. The condition $\dim(X)\ge 1$ ensures that when $k$ is large then $\cO(kn)$ has a large space of sections, in which case so would $\cO\cong \cO(-kn)\otimes\cO(kn)$. But this contradicts the fact that the only global regular functions on a projective variety are the constants (\cite[Theorem I.3.4]{MR0463157}).  

If $\QCoh(X)_{\geq n}$ is the full, cocomplete subcategory of $\QCoh(X)$ spanned by $\{\cO(m)\}_{m \geq n}$, then  $ \QCoh(X) \simeq \varinjlim_{n \to -\infty}  \QCoh(X)_{\geq n}$.  Without dualizability of $\QCoh(X)_{\geq n}$, the essential image of $\QCoh(X)^* \boxtimes \QCoh(X)$ inside $\Hom(\QCoh(X),\QCoh(X))$ may fail to include all colimits of the form $F = \varinjlim F_{n}$ for $F_{n}$ factoring through $\QCoh(X)_{\geq n}$, but the arguments of \Cref{lemma.essentialimage} do imply that any $F$ in the essential image is of this type.  Thus, as in the proof of \Cref{thm.coalgebras-dualizability}, it suffices to find a nonzero object $M \in \QCoh(X)$ such that for all sufficiently negative $n$, any natural transformation $\theta_{n} : F_{n} \to \operatorname{id}_{\QCoh(X)}$ satisfies $\theta_{n}(M) = 0 : F_{n}(M) \to M$.
  
  Fix $k$ arbitrarily and set $M = \cO(k)$.  For any $n \leq k$, we can find some direct sum of $\cO(m)$s with $m < n$ that surjects onto $\cO(k)$.   We thus build a commutative square similar to the one from the proof of \Cref{thm.coalgebras-dualizability}:
  $$ \begin{tikzpicture}[auto]
  \path[anchor=base] (0,0) node (FS) {$F_n\bigl(\cO(k)\bigr)$} (0,1.5) node (FT) {$F_n\bigl(\bigoplus \cO(m)\bigr)$} (5,0) node (S) {$\cO(k)$} (5,1.5) node(T) {$\bigoplus \cO(m)$};
  \draw[->>] (FT) -- (FS); \draw[->>] (T) -- (S);
  \draw[->] (FT.mid -| FT.east) -- node {$\theta_n\bigl(\bigoplus \cO(m)\bigr)$} (T.mid -| T.west);
  \draw[->] (FS.mid -| FS.east) -- node {$\theta_n\bigl(\cO(k)\bigr)$} (S.mid -| S.west);
  \end{tikzpicture} $$
  As before, the left arrow is a surjection since $F_n$ is cocontinuous.  But $F_n\bigl(\bigoplus \cO(m)\bigr) \in \QCoh(X)_{\geq n}$, and so $\theta_n\bigl(\bigoplus \cO(m)\bigr) = 0$.  It follows that $\theta_n\bigl(\cO(k)\bigr) = 0$.
\end{proof}


\section{Reflexivity} \label{section.reflexivity}

The goal of this section is to prove \Cref{pr.aux}, which we
 will use  to prove \Cref{thm.reflexivity-stronger,thm.tensoriality}.  The arguments in this section apply when $\bK$ is not a field but just a commutative ring, in which case ``$\Vect$'' means the symmetric monoidal category $\cM_{\bK}$ of all $\bK$-modules.

In this section we denote by $(\cI,\le)$ a generic poset which is $\aleph_{0}$-directed: any two $i,j\in\cI$ are dominated by some $k\in\cI$. As always, we regard $\cI$ as a category with an arrow $i\to j$ for $i\le j$. 

\begin{definition}\label{defn.acceptable}
An \define{$\cI$-indexed pro-object} (or just pro-object, when $\cI$ is understood) in a category $\cT$ is a functor $\cI^\op\to\cT$. 
An \define{$\cI$-indexed pro-algebra} (or just pro-algebra) is an $\cI$-indexed pro-object in $\bK$-algebras. 
An \define{acceptable} ($\cI$-indexed) pro-algebra is a pro-algebra whose indexing poset $\cI$ has countable cofinality (i.e.\ it has a countable cofinal subset), and for which the transition maps $A_j\to A_i$ for $i\le j$ are onto. 
\end{definition}

Note that for any pro-algebra $\{A_i\}_{i\in\cI^\op}$ and $i\le j$ we can pull back $A_{i}$-modules to $A_{j}$-modules, thereby getting an inductive system of locally presentable $\bK$-linear categories $\{\cM_{A_{i}}\}_{i\in \cI}$.

\begin{remark}\label{remark.I_to_N}
The cofinal countability of $\cI$ for acceptable pro-algebras will come up in a number of ways:

First, it allows us to assume that $(\cI,\le)$ is $\bN=\{0,1,\ldots\}$ with the usual order, as pro-algebras indexed by varying posets form a category, and every pro-object whose indexing poset has countable cofinality is isomorphic in this category to an $\bN$-indexed one. Since the discussion below is invariant under isomorphism in the category of pro-algebras, we will make such a substitution in  the sequel. 

Second, for an acceptable $\cI$-indexed pro-algebra $\{A_i\}$ the maps from the limit $\widehat A := \varprojlim_i A_i$ to the individual $A_j$s are onto. This is seen by first turning the pro-algebra into an acceptable $\bN$-indexed one, in which case surjectivity is obvious. In general, when $\cI$ has uncountable cofinality, it is possible for all $A_{i}$s to be nontrivial (i.e.\ not the ground field) and all transition maps $A_{j}\to A_{i}$ to be onto but nevertheless for $\varprojlim A_{i}$ to be trivial; see e.g.~\cite[Corollary 8]{Ber}. 
\end{remark}

\begin{remark}\label{remark.motivationfordiscrete}
One can pull back $A_{i}$-modules to $\widehat A$-modules. These pull back functors are compatible, and therefore give a functor $\varinjlim_{i} \cM_{A_{i}} \to \cM_{\widehat A}$, which is easily seen to be full and faithful.  Its essential image consists of those $\widehat A$-modules $V$ such that for every $v\in V$ there is some $i \in \cI$ for which $v$ is killed by the kernel of the projection $\widehat A \twoheadrightarrow A_{i}$.  More generally, the essential image of 
\[
 (\varinjlim_{i} \cM_{A_{i}}) \boxtimes \cM_{B} \simeq \varinjlim_{i} (\cM_{A_{i}} \boxtimes \cM_{B}) \to \cM_{\widehat A} \boxtimes \cM_{B} \simeq \cM_{\widehat A \otimes B}
\] 
consists of those $(\widehat A\otimes B)$-modules $V$ whose underlying $\widehat{A}$-modules satisfy the property above. This motivates the following definition:
\end{remark}

\begin{definition}\label{defn.discrete}
A module $V$ over a filtered limit $\widehat{A}=\varprojlim_i A_i$ of algebras is called \define{discrete} if for every $v\in V$ there is some $i \in \cI$ for which $v$ is killed by $\ker(\widehat A \twoheadrightarrow A_{i})$.
\end{definition}

Now, since $(-)^{*} = \Hom(-,\Vect)$ turns colimits into limits, we have 
\[
 (\varinjlim \cM_{A_{i}})^{*} \simeq \varprojlim( \cM_{A_{i}})^{*} \simeq \varprojlim ({_{A_{i}}\cM}) 
\] 
(where the second equivalence uses the Eilenberg--Watts theorem).
An object of this limit is a sequence $\{M_{i}\}_{i\in \cI}$ whose $i$th entry is a left $A_{i}$-module, along with isomorphisms $M_{i} \cong A_{i} \otimes_{A_{j}} M_{j}$ for $j>i$, compatible for triples $k>j>i$. Notice that this implies that the maps $M_j \to M_i$ are onto.  An example is the regular module $A = \{A_{i}\}_{i\in\cI}$ with the canonical isomorphisms, which when thought of as an object of $(\varinjlim \cM_{A_{i}})^{*}$ is nothing but the forgetful functor $\varinjlim \cM_{A_{i}} \to \Vect$.  It enjoys $\hom(A,A) \cong \widehat A$.

More generally,  any $M  \in \varprojlim ({_{A_{i}}\cM})$ is among other things a pro-vector space $M = \{M_{i}\}_{i\in \cI^{\op}}$, and $\hom(A,M) \cong \widehat M = \varprojlim M_{i}$ is its limit in $\Vect$. It consists of \define{compatible systems} of elements $\{m_{i}\in M_{i}\}_{i\in \cI}$, i.e.\ $m_{j}$ is the image of $m_{i}$ under the projection $M_{i} \twoheadrightarrow A_{j}\otimes_{A_{i}} M_{i} \cong M_{j}$.

For the remainder of this section we let   $\cM=\varprojlim_i {_{A_i}}\cM$.

\begin{lemma}\label{lemma.presentation_aux}
Let $M=\{M_i\}_{i\in \cI}$ and $N=\{N_i\}_{i\in\cI}$ be objects in $\cM$, and $f:M\to N$ be an epimorphism (i.e.\ all  components $f_i:M_i\to N_i$ are onto). Let also $m_j\in\ker(f_j)$ be an element for some fixed $j\in\cI$. Then $m_j$ can be expanded to a compatible system of elements $m_i\in\ker(f_i)$, $i\in\cI$. 
\end{lemma}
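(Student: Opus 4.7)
The plan is to construct the compatible system $\{m_i\}_{i\in\cI}$ by recursive lifting along a countable cofinal chain in $\cI$, with the key ingredient being an elementary surjectivity-on-kernels observation. The starting point is the base-change description of transition maps in $\cM$: for $i\le j$ in $\cI$, the isomorphism $M_i\cong A_i\otimes_{A_j}M_j$ together with surjectivity of $A_j\twoheadrightarrow A_i$ identifies the kernel of $M_j\twoheadrightarrow M_i$ with $I_{ij}M_j$, where $I_{ij}:=\ker(A_j\twoheadrightarrow A_i)$; the same holds for $N$. Since each component $f_j$ is both $A_j$-linear and surjective, it restricts to a surjection $I_{ij}M_j\twoheadrightarrow I_{ij}N_j$, that is, $\ker(M_j\twoheadrightarrow M_i)\twoheadrightarrow\ker(N_j\twoheadrightarrow N_i)$.

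For indices $i\le j$ I would simply set $m_i$ to be the image of $m_j$ under $M_j\to M_i$; naturality of $f$ then forces $f_i(m_i)=0$. The harder case is $k\ge j$, for which I would exploit the countable cofinality of $\cI$ together with its $\aleph_0$-directedness to pick a chain $j=j_0<j_1<j_2<\cdots$ cofinal in $\cI$, and construct $m_{j_n}$ recursively. Given $m_{j_{n-1}}\in\ker(f_{j_{n-1}})$, I would first pick any preimage $\tilde m\in M_{j_n}$ of $m_{j_{n-1}}$ (possible by surjectivity of the transition map); next, $f_{j_n}(\tilde m)$ projects down to $f_{j_{n-1}}(m_{j_{n-1}})=0$, so $f_{j_n}(\tilde m)\in\ker(N_{j_n}\twoheadrightarrow N_{j_{n-1}})$; then the surjectivity-on-kernels observation above produces $x\in\ker(M_{j_n}\twoheadrightarrow M_{j_{n-1}})$ with $f_{j_n}(x)=f_{j_n}(\tilde m)$, and setting $m_{j_n}:=\tilde m-x$ finishes the inductive step.

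Finally, for an arbitrary $k\in\cI$, I would pick some $j_n$ with $j_n\ge k$ (available by combining cofinality of $\{j_n\}$ with $\aleph_0$-directedness to dominate both $k$ and $j$) and set $m_k$ to be the image of $m_{j_n}$ in $M_k$. Well-definedness of $m_k$ (independence from the choice of $j_n$) and compatibility across arbitrary pairs $k\le l$ in $\cI$ would then follow by routine diagram chases from compatibility of the $m_{j_n}$ inside the chain together with functoriality of the transition maps; the containment $m_k\in\ker(f_k)$ is automatic since $f_k$ is compatible with transitions. The only step with any real content is the inductive lifting, and that in turn rests on the elementary fact that an $A$-linear surjection $M\to N$ restricts to a surjection $IM\to IN$ for any left ideal $I\subseteq A$, so I expect no serious obstacle.
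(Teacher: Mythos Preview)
Your argument is correct and matches the paper's proof essentially step for step: the paper simply reduces to $\cI=\bN$ and $j=0$ at the outset (via \Cref{remark.I_to_N}), then performs exactly your inductive lifting---pick a preimage, observe its image lands in $\ker(A_1\to A_0)\cdot N_1$, lift that to $\ker(A_1\to A_0)\cdot M_1$, subtract. The only difference is cosmetic: by passing to $\cI=\bN$ first, the paper avoids the bookkeeping of your final paragraph (projecting from the cofinal chain to arbitrary $k$ and checking well-definedness), but the mathematical content is identical.
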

\begin{proof}
We may as well assume $\cI = \bN$ by \cref{remark.I_to_N} and $j=0$. Choose any preimage $m'_1 \in M_1$, and consider its image $n'_1 \in N_1$. It lies in $\ker(N_1 \to N_0) = \ker(A_1 \to A_0) \cdot N_1$, which thus lifts to some element in $\ker(A_1 \to A_0) \cdot M_1$. Subtracting this element from $m'_1$, we get our desired lift $m_1 \in M_1$. Now repeat to construct $m_2$, etc.
\end{proof}

\begin{lemma}\label{pr.canonical_presentation}
The objects of $\cM$ admit functorial presentations of the form
\[
  N\mapsto (A^{\oplus S_1(N)}\to A^{\oplus S_0(N)}\to N\to 0),
\]
where $S_i$ are functors $\cM\to\cat{Set}$. 
\end{lemma}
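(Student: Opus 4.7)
The plan is to take $S_0(N) := \hom(A,N)$, which by the discussion preceding the lemma is canonically identified with the set $\widehat{N} = \varprojlim_i N_i$ of compatible systems. Evaluation produces a canonical morphism $\varepsilon_N : A^{\oplus S_0(N)} \to N$ in $\cM$, functorial in $N$ simply because $\hom(A,-)$ is a functor and the evaluation is natural.

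The first nontrivial step is to verify that $\varepsilon_N$ is an epimorphism in $\cM$, i.e.\ that each component $A_i^{\oplus \widehat{N}} \to N_i$ is onto. This reduces to showing that the projection $\widehat{N} \to N_i$ is surjective for every $i \in \cI$. The transition maps $N_j \to N_i$ for $j \ge i$ are onto (as noted in the paragraph just before the lemma), and $\cI$ has countable cofinality, so after invoking \cref{remark.I_to_N} to replace $\cI$ by $\bN$ a Mittag-Leffler-style argument applies: a prescribed element of $N_i$ is lifted successively to elements of $N_{i+k}$ using surjectivity of $N_{i+k+1} \twoheadrightarrow N_{i+k}$, assembling into a compatible system with the desired projection.

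Next, define $K(N) := \ker(\varepsilon_N) \in \cM$; functoriality of $\ker$ together with naturality of $\varepsilon$ makes $N \mapsto K(N)$ into a functor $\cM \to \cM$. Because $\varepsilon_N$ is an epimorphism, \cref{lemma.presentation_aux} ensures that the transition maps of $K(N)$ are again surjective, so the construction of the previous paragraph applies verbatim to $K(N)$: I set $S_1(N) := \hom(A, K(N))$, obtain an epimorphism $A^{\oplus S_1(N)} \twoheadrightarrow K(N)$ by evaluation, and compose with the inclusion $K(N) \hookrightarrow A^{\oplus S_0(N)}$. The resulting sequence
\[
A^{\oplus S_1(N)} \to A^{\oplus S_0(N)} \xrightarrow{\varepsilon_N} N \to 0
\]
is exact by construction (the middle image equals $K(N)$, and $\varepsilon_N$ is onto by the previous step), and functoriality of every ingredient --- $\hom(A,-)$, $\ker$, and the canonical evaluation --- makes the whole presentation functorial in $N$.

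The main obstacle is the Mittag-Leffler step establishing surjectivity of $\widehat{N} \to N_i$: countable cofinality of $\cI$ is genuinely needed there, as the example cited in \cref{remark.I_to_N} shows that the analogous statement can fail for $\widehat{A}$ when $\cI$ has uncountable cofinality. Once that step is in hand, \cref{lemma.presentation_aux} is precisely the device that propagates surjectivity of transition maps from $N$ to $K(N)$, so the same construction iterates cleanly and the rest of the argument is formal bookkeeping.
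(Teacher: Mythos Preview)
Your proof is correct and follows the same route as the paper's: the same $S_0(N)=\widehat N$, the same $S_1(N)$ (your $\hom(A,K(N))$ unwinds to $\varprojlim_i \ker(\varepsilon_{N,i})$, which is exactly the paper's definition), and the same two-step use of Mittag--Leffler together with \cref{lemma.presentation_aux}.

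One small caution about packaging. You write $K(N):=\ker(\varepsilon_N)\in\cM$ and then invoke \cref{lemma.presentation_aux} to get surjective transition maps. But the transition functors $A_i\otimes_{A_j}-$ are only right exact, so the \emph{levelwise} kernel $\{\ker(\varepsilon_{N,i})\}_i$ need not satisfy the compatibility isomorphisms required of an object of $\cM$, and conversely the categorical kernel in $\cM$ need not have $i$th component equal to $\ker(\varepsilon_{N,i})$. The paper sidesteps this by never forming $K(N)$ as an object of $\cM$: it simply sets $S_1(N)=\varprojlim_i\ker(\varepsilon_{N,i})$ and checks levelwise exactness directly, which is precisely what \cref{lemma.presentation_aux} delivers (any element of $\ker(\varepsilon_{N,j})$ lifts to a compatible system, hence is hit at level $j$ by $A_j^{\oplus S_1(N)}$). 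Your argument is really doing the same thing---and since $\hom_\cM(A,\ker_\cM(\varepsilon_N))=\varprojlim_i\ker(\varepsilon_{N,i})$ regardless, the presentations agree---but the phrase ``$K(N)\in\cM$ with surjective transition maps by \cref{lemma.presentation_aux}'' conflates the two notions of kernel. If you want to keep the categorical phrasing, note that any object of $\cM$ automatically has surjective transition maps, so \cref{lemma.presentation_aux} is not needed for that; where it is genuinely needed is to show that the levelwise image of $A^{\oplus S_1(N)}\to A^{\oplus S_0(N)}$ fills up all of $\ker(\varepsilon_{N,j})$, which is the exactness statement you want.
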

\begin{proof}
Define $S_0(N)=\widehat{N}$, and the natural epimorphism $A^{\oplus S_0(N)}\to N$ in the obvious way: For $n\in\widehat{N}\cong\hom_\cM(A,N)$, the map from the $n$th summand of $A^{\oplus\widehat{N}}$ to $N$ is simply $n$. 

The resulting map $A^{S_0(N)}\to N$ is surjective at each level $j\in\cI$, because any $n_j\in N_j$ can be lifted to a compatible system of elements $n_i\in N_i$, $i\in\cI$. 

Next, let $S_1(N)$ be the set of compatible systems of elements $a_i\in\ker\bigl(A_i^{\oplus S_0(N)}\to N_i\bigr)$ and define $A^{\oplus S_1(N)}\to A^{\oplus S_0(N)}$ similarly as before. Exactness of
\[
 A^{\oplus S_1(N)}\to A^{\oplus S_0(N)}\to N\to 0
\]
follows from \Cref{lemma.presentation_aux} by taking $f$ to be our morphism $A^{\oplus S_0(N)}\to N$. 
\end{proof}

We are now equipped to prove \cref{pr.aux}, of which \Cref{thm.reflexivity-stronger,thm.tensoriality} are consequences.

\begin{proposition}\label{pr.aux}
Let $A=\{ A_{i}\}_{i\in \cI^{\op}}$ be an acceptable pro-algebra and $B$ be any $\bK$-algebra.  The functor
\[ \eta: \Hom \Bigl(\varprojlim ({_{A_{i}}\cM}),\cM_{B} \Bigr)\to \cM_{\widehat A \otimes B},\quad F\mapsto F(A) \]
is fully faithful. Its essential image consists of those $\widehat{A}\otimes B$-modules which are discrete over $\widehat{A}$ in the sense of \Cref{defn.discrete}.
\end{proposition}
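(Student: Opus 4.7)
My plan is to use the canonical presentation \Cref{pr.canonical_presentation} together with the identification $\hom_\cM(A,A)\cong\widehat A$ established just before \Cref{lemma.presentation_aux}. Throughout I write $\cM:=\varprojlim({_{A_i}}\cM)$.

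First I would verify that $\eta$ is well-defined: for cocontinuous $\bK$-linear $F$, functoriality turns $\hom_\cM(A,A)\cong\widehat A$ into a ring of endomorphisms of the $B$-module $F(A)$, and $\bK$-linearity makes the two actions commute, so $F(A)\in\cM_{\widehat A\otimes B}$. For fully-faithfulness I would use that the canonical presentation $A^{\oplus S_1(N)}\to A^{\oplus S_0(N)}\to N\to 0$ is built entirely out of copies of $A$, so the boundary map is encoded by a matrix of elements of $\widehat A$ (subject to the level-wise finiteness constraints from the proof of \Cref{pr.canonical_presentation}). Applying a cocontinuous $F$ expresses $F(N)$ as the cokernel of the same $\widehat A$-valued matrix acting on $F(A)^{\oplus S_j(N)}$, so $F$ is recovered functorially from the bimodule $F(A)$; the same argument shows natural transformations are determined by their $A$-components, which are automatically $\widehat A$-linear by naturality against right-multiplications.

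For the essential image I would first produce preimages of every discrete module. Given a discrete $V\in\cM_{\widehat A\otimes B}$, write $V=\varinjlim_k V^{(k)}$ with $V^{(k)}:=V[\ker(\widehat A\twoheadrightarrow A_k)]$ an $A_k\otimes B$-module, and set
\[ F_V(N):=\varinjlim_k V^{(k)}\otimes_{A_k} N_k, \]
using the natural compatibilities $V^{(k)}\otimes_{A_k}N_k\cong V^{(k)}\otimes_{A_{k+1}}N_{k+1}\hookrightarrow V^{(k+1)}\otimes_{A_{k+1}}N_{k+1}$. Because colimits in $\cM$ are computed levelwise (\Cref{lemma.2limitsandcolimits,rem.lim}) and tensor products commute with colimits, $F_V$ is cocontinuous, and $F_V(A)\cong V$ exhibits $V$ in the essential image.

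The main obstacle is the converse, showing $F(A)$ is automatically discrete for every cocontinuous $F$. Here I would exploit the surjections $A\twoheadrightarrow A^{(k)}$ onto the subobjects $A^{(k)}:=\{A_{\min(i,k)}\}_i\in\cM$, for which a direct limit computation gives $\hom_\cM(A^{(k)},A^{(k)})=A_k$; cocontinuity of $F$ then shows that each quotient $F(A^{(k)})$ is an $A_k$-module, so the induced $\widehat A$-action factors through $\widehat A\twoheadrightarrow A_k$ and each $F(A^{(k)})$ is $A_k$-discrete. To upgrade this to discreteness of $F(A)$ itself, I would form the discrete submodule $V^{\mathrm{disc}}:=\varinjlim_k F(A)[\ker(\widehat A\twoheadrightarrow A_k)]\subseteq F(A)$, use fully-faithfulness to promote the inclusion $V^{\mathrm{disc}}\hookrightarrow F(A)$ to a natural transformation $F_{V^{\mathrm{disc}}}\to F$, and then argue using the canonical presentation together with the surjectivity of the transition maps and countable cofinality of $\cI$ (via \Cref{lemma.presentation_aux}) that this natural transformation is an isomorphism, so $F(A)=V^{\mathrm{disc}}$. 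Establishing this final isomorphism is the genuinely technical heart of the proof; everything else is bookkeeping around \Cref{pr.canonical_presentation}.
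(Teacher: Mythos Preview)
Your treatment of well-definedness, of the construction of $F_V$ for discrete $V$, and of faithfulness and fullness via the functorial presentations of \Cref{pr.canonical_presentation} is essentially the same as the paper's (Steps~1, 3, 4 there), and is fine.  The paper is a bit more careful in the fullness step about reducing a square indexed by arbitrary sets $S,T$ to the case $|S|=|T|=1$ via the inclusions $\iota_s:A\to A^{\oplus S}$ and projections $\pi_t:A^{\oplus T}\to A$, but your sketch is correct in outline.

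The genuine gap is in the direction ``$F(A)$ is automatically discrete.''  Your plan is to form $V^{\mathrm{disc}}\subseteq F(A)$, promote the inclusion to a natural transformation $\theta:F_{V^{\mathrm{disc}}}\to F$ via fullness, and then prove $\theta$ is an isomorphism ``using the canonical presentation.''  But the canonical presentation expresses every $\theta_N$ as a cokernel of copies of $\theta_A$; the five-lemma then tells you only that $\theta$ is an isomorphism \emph{iff} $\theta_A$ is, and $\theta_A$ is precisely the inclusion $V^{\mathrm{disc}}\hookrightarrow F(A)$ whose surjectivity is the claim you are trying to prove.  The detour through the quotients $A^{(k)}$ does give that each $F(A^{(k)})$ is an $A_k$-module, but since $F$ is only right exact you get surjections $F(A)\twoheadrightarrow F(A^{(k)})$, not injections, and no control on the kernels; I do not see how this breaks the circularity.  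You yourself flag this step as ``the genuinely technical heart,'' and indeed it is --- but no argument for it has been supplied.

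The paper's proof of this step (its Step~2) is short and uses a different idea that exploits cocontinuity directly.  Assume $\cI=\bN$ and suppose $v\in F(A)$ is not killed by any $\ker(\widehat A\twoheadrightarrow A_i)$; choose $f_i\in\ker(\widehat A\twoheadrightarrow A_i)$ with $f_i\cdot v\neq 0$.  Because $A$ is \emph{not} compact in $\cM$, a morphism $A\to A^{\oplus\bN}$ is not an arbitrary $\bN$-tuple in $\widehat A$ but rather one for which, at each finite level $j$, all but finitely many components vanish; the sequence $(f_i)_i$ satisfies exactly this, so it defines a map $f:A\to A^{\oplus\bN}$.  Cocontinuity gives $F(f):F(A)\to F(A)^{\oplus\bN}$, and the $i$th component of $F(f)(v)$ is $f_i\cdot v\neq 0$ for every $i$, contradicting the fact that $F(f)(v)$ lies in a direct sum.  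This is the missing ingredient; everything else in your proposal is sound.
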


\begin{proof} As before, we will abbreviate $\cM := \varprojlim ({_{A_{i}}\cM})$.

\begin{proofstep}{The essential image of $\eta$ contains all $\widehat{A}$-discrete $\widehat{A}\otimes B$-modules.}

Given some $\widehat{A}$-discrete $\widehat{A} \otimes B$-module $V$, let $V_i := \{v \in V : \ker(\widehat{A} \to A_i) \cdot v = 0\}$ be the largest $A_i$-submodule of $V$. Then $i \leq j$ implies $V_i \subseteq V_j$ and we have $V = \varinjlim_i V_i$. If $M \in \cM$, we obtain a $B$-linear map
\[V_i \otimes_{A_i} M_i \cong V_i \otimes_{A_i} (A_i \otimes_{A_j} M_j) \cong V_i \otimes_{A_j} M_j \to V_j \otimes_{A_j} M_j.\]
Define the functor  $F : \cM \to \cM_B$ by $F(M) := \varinjlim_i V_i \otimes_{A_i} M_i$. Clearly, $F$ is cocontinuous and satisfies $F(A) \cong V$.
\end{proofstep}

\begin{proofstep}{The essential image of $\eta$ contains only  $\widehat{A}$-discrete $\widehat{A}\otimes B$-modules.} \label{newstep2}

Let $F \in \Hom\bigl( \cM, \cM_{B}\bigr)$. By \cref{remark.I_to_N} we may assume $\cI = \bN$.  We want to show that for every $v\in F(A)$, there is an $i\in \bN$ such that $v$ is killed by $\ker\bigl(\widehat A \twoheadrightarrow A_{i}\bigr)$.

Consider the direct sum $A^{\oplus \bN}$ of $\aleph_0$-many copies of $A$ in $\cM$.  The object $A$, although a generator of $\cM$, is not compact when the pro-algebra $A$ does not ``stabilize'' (i.e.\ when $A_{i+1}\to A_i$ are not  isomorphisms for infinitely many $i$).  A manifestation of this is that $\hom(A,A^{\oplus \bN})$ is not $\widehat A^{\oplus \bN}$ but rather the projective limit of $A^{\oplus \bN}$ considered as a diagram in $\Vect$.  In other words, a homomorphism $A\to A^{\oplus \bN}$ is the same as a system of homomorphisms $\{f_i:A\to A\}_{i\in \bN}$ with the property that for all $j\in \bN$, all but finitely many of the $f_i$s belong to  $\ker\bigl(\widehat A\twoheadrightarrow A_{j}\bigr)$.

Suppose now that $v\in F(A)$ is such that for every $i\in \bN$, there is some $f_i\in\ker\bigl(\widehat A\to A_i\bigr)$ which does not annihilate $v$. These $f_{i}$s define a homomorphism $f : A \to A^{\oplus \bN}$.  Since $F$ is cocontinuous, we get a linear map
\[
 F(f) : F(A) \to F\bigl(A^{\oplus \bN}\bigr) \cong F(A)^{\oplus \bN}
\]
for which every component of $F(f)(v)$ is non-zero. This is absurd, since $F(f)(v)$ is supposed to sit inside the direct sum $F(A)^{\oplus \bN}$. 
\end{proofstep}

\begin{proofstep}{The functor $\eta$ is faithful.}

This follows immediately from \Cref{pr.canonical_presentation} and the fact that colimit-preserving functors preserve epimorphisms.
\end{proofstep}

\begin{proofstep}{The functor $\eta$ is full.}

Let $F,G\in\Hom(\cM\to\cM_B)$. We have to show that any $\widehat{A}\otimes B$-morphism $\phi:F(A)\to G(A)$ is the $A$-component $\theta_A$ of a natural transformation $\theta:F\to G$. 

Recall from \Cref{lemma.presentation_aux} that we have functorial free resolutions for objects in $\cM$.   We claim that to prove fullness of $\eta$,
 it suffices to check, for all sets $S$ and $T$ and all morphisms $f:A^{\oplus S}\to A^{\oplus T}$, the commutativity of 
\begin{equation}\label{eq.preaux}
  \begin{tikzpicture}[auto,baseline=(current  bounding  box.center)]
    \path[anchor=base] (0,0) node (FAS) {$F(A)^{\oplus S}$} +(3,0) node (FAT) {$F(A)^{\oplus T}$} +(0,-1.5) node (GAS) {$G(A)^{\oplus S}$} + (3,-1.5) node(GAT) {$G(A)^{\oplus T}$};
    \draw[->] (FAS) to node[auto] {$\scriptstyle F(f)$} (FAT); 
    \draw[->] (GAS) to node[auto] {$\scriptstyle G(f)$} (GAT);
    \draw[->] (FAS) to node[auto,swap] {$\scriptstyle\phi^{\oplus S}$} (GAS);
    \draw[->] (FAT) to node[auto] {$\scriptstyle\phi^{\oplus T}$} (GAT);
  \end{tikzpicture}
\end{equation}  
Indeed, we can then set $\theta_{A^{\oplus S}}=\phi^{\oplus S}$ for every set $S$ and extend this to a natural transformation $\theta$ via the functorial resolutions. 

The right hand vertical arrow in \Cref{eq.preaux} embeds into
\begin{equation*}
  \begin{tikzpicture}[auto,baseline=(current  bounding  box.center)]
    \path[anchor=base] (0,0) node (FAT) {$ F(A)^{\times T}$} +(3,0) node(GAT) {$G(A)^{\times T}$};
    \draw[->] (FAT) to node[auto] {$\scriptstyle \phi^{\times T}$} (GAT); 
  \end{tikzpicture}
\end{equation*}  
(direct product rather than direct sum).  For each $s\in S$ and each $t\in T$, let $\iota_{s} :A \to A^{\oplus S}$ denote the inclusions onto the $s$th summand, and $\pi_{t} : A^{\oplus T}\to A$ the projection onto the $t$th summand (the latter exists in any additive category).  Then the $(t,s)$th matrix entry in the composition $F(A)^{\oplus S} \overset{F(f)}\longrightarrow F(A)^{\oplus T} \to F(A)^{\times T}$ is just $F(f')$ for $f' = \pi_{t}\circ f \circ \iota_{s}$; the $(t,s)$th matrix entry in $G(A)^{\oplus S}\to G(A)^{\times T}$ is $G(f')$ for the same $f' \in \hom_{\cM}(A,A)$.
Thus, by replacing $f$ with $f'$ in \cref{eq.preaux}, we can assume $S$ and $T$ are singletons.

Finally, the commutativity of  
\begin{equation*}\label{eq.aux}
  \begin{tikzpicture}[auto,baseline=(current  bounding  box.center)]
    \path[anchor=base] (0,0) node (FA) {$F(A)$} +(3,0) node (FAT) {$F(A)$} +(0,-1.5) node (GA) {$G(A)$} + (3,-1.5) node(GAT) {$G(A)$};
    \draw[->] (FA) to node[auto] {$\scriptstyle F(f')$} (FAT); 
    \draw[->] (GA) to node[auto] {$\scriptstyle G(f')$} (GAT);
    \draw[->] (FA) to node[auto,swap] {$\scriptstyle\phi$} (GA);
    \draw[->] (FAT) to node[auto] {$\scriptstyle\phi$} (GAT);
  \end{tikzpicture}
\end{equation*}
follows from the fact that $F(f')$ and $G(f')$ are nothing but the actions of $f' \in \hom_{\cM}(A,A) \cong \widehat A$ on $F(A)$ and $G(A)$,
while $\phi$ is by definition a morphism of $\widehat{A}$-modules. 
\end{proofstep}
\end{proof}

\begin{remark}
Actually \Cref{pr.aux} can be proven in a more general setting and provides a partial universal property of $\varprojlim ({_{A_{i}}\cM})$: If $\cC$ is a locally finitely presentable $\bK$-linear category (i.e.\ locally presentable and spanned by its compact objects), then $\Hom \bigl(\varprojlim ({_{A_{i}}\cM}),\cC \bigr)$ is equivalent to the category of discrete $\widehat{A}$-module objects in $\cC$. These are objects $T \in \cC$ equipped with a homomorphism of $\bK$-algebras $\widehat{A} \to \mathrm{End}_{\cC}(T)$ such that $T = \varinjlim_i T_i$, where $T_i := \ker\bigl(T \to \prod_{a \in \ker(\widehat{A} \to A_i)} T\bigr)$ is the largest $A_i$-submodule object of $T$. The same proof as before works. Only in \hyperref[newstep2]{Step \ref{newstep2}} we have to replace $v$ by a morphism $P \to F(A)$, where $P$ is any compact object.
\end{remark}

\begin{proof}[Proof of \Cref{thm.reflexivity-stronger}]
\Cref{lemma.2limitsandcolimits} identifies the category $\cC$ from the statement of \Cref{thm.reflexivity-stronger} with the colimit $\varinjlim_{i}\cM_{A_{i}}$ in $\cat{Pres}_{\bK}$. Consider the commutative triangle 
$$ \begin{tikzpicture}[auto]
  \path node (tensor) {$\bigl(\varinjlim_{i} \cM_{A_{i}}\bigr) \boxtimes \cM_{B}$}
    +(3,-1.5) node (codomain) {$\cM_{\widehat{A}\otimes B}$}
    +(6,0) node (hom) {$\Hom \Bigl(\bigl(\varinjlim_{i} \cM_{A_{i}}\bigr)^{*},\cM_{B} \Bigr)$};
  \draw[->] (tensor) -- node {$\scriptstyle \dd$} (hom);
  \draw[right hook->] (tensor) -- node[swap] {$\scriptstyle \text{inclusion}$} (codomain);
  \draw[->] (hom) -- node {$\scriptstyle \eta$} (codomain);
\end{tikzpicture} $$

of functors. According to \Cref{remark.motivationfordiscrete,pr.aux}, both vertical functors are fully faithful with essential image the category of $\widehat{A}$-discrete $\widehat{A}\otimes B$-modules. It follows that $\dd$ is an equivalence of categories.
\end{proof}

We conclude this section with a proof of \Cref{thm.tensoriality}. Recall that for a stack $X$ over $\bK$, the category $\QCoh(X)$ is not just $\bK$-linear and locally presentable, but also symmetric monoidal in the sense of \Cref{defn.tensor_loc_pres}.
 
Following~\cite{Brandenburg2011,HallRydh2014}, call a stack $X$ over $\bK$ \define{tensorial} if for any affine scheme $\Spec(B)$ over $\bK$, the functor
\[ \hom\bigl(\Spec(B),X\bigr) \to \Hom_{\otimes}\bigl( \QCoh(X),\cM_{B}\bigr), \quad f \mapsto f^{*} \]
is an equivalence. Such stacks are called ``2-affine'' in~\cite{MR3097055,MR3144607}, where symmetric monoidal locally presentable (resp. cocomplete) categories are called ``commutative 2-rings.''  

In particular, we can talk about tensorial schemes, a notion which we now extend to ind-schemes in the obvious manner.  Recall that an ind-scheme $X=\varinjlim_i X_i$ is a formal directed colimit of schemes $X_i$ where the transition morphisms $X_i \to X_j$ ($i \leq j$) are closed immersions. One defines $\QCoh(X) := \varprojlim_{i} \QCoh(X_i)$.

\begin{definition}\label{defn.tensoriality}
An ind-scheme $X=\varinjlim_{i \in I} X_i$ is \define{tensorial} if for every commutative algebra $B$ the canonical functor
$$  \varinjlim_i \hom\bigl(\Spec(B),X_i\bigr) = \hom\bigl(\Spec(B),X\bigr) \to \Hom_{\otimes}\bigl( \QCoh(X),\cM_{B}\bigr), \quad f \mapsto f^{*} $$
is an equivalence of categories. Hence, given an acceptable commutative pro-algebra $\{A_i\}_{i\in\cI^\op}$, the corresponding affine ind-scheme $\varinjlim \Spec(A_{i})$ is tensorial if for every commutative algebra~$B$ the canonical functor
\begin{equation*}\label{eq.tensorial}
\varinjlim_i \hom\bigl(A_i,B\bigr) \to \Hom_{\otimes}\Bigr( \varprojlim_i \cM_{A_i},\cM_{B}\Bigl)
\end{equation*}
is an equivalence of categories.
\end{definition}

\begin{proof}[Proof of \Cref{thm.tensoriality}]
Let $\{A_i\}_{i\in\cI^\op}$ and $B$ be as above. Let $\cM := \varprojlim_i \cM_{A_i}$. By \Cref{pr.aux} the category $\Hom_\otimes(\cM,\cM_B)$ is equivalent to the category of $\widehat{A}$-discrete $\widehat{A} \otimes B$-modules $V$ together with a symmetric monoidal structure on the corresponding cocontinuous functor $F : \cM \to \cM_B$ with $F(A)=V$. We will argue that this is equivalent to the (discrete) category of algebra homomorphisms $\widehat{A} \to B$ which factor through some $A_i$.

Symmetric monoidality provides a distinguished isomorphism of $B$-modules $F(A) \cong B$, so that we may assume that the underlying $B$-module of $V$ is just $B$. The $\widehat{A}$-module structure on $B$ comes from the algebra homomorphism $\widehat{A} = \mathrm{End}(A) \to \mathrm{End}(F(A)) \cong \mathrm{End}(B) \cong B$. Discreteness means that every element of $B$ is killed by some $\ker(\widehat{A} \to A_i)$, but it clearly suffices to demand this for $1 \in B$. In other words, $\widehat{A} \to B$ factors through some $A_i$. Next, a morphism $F \to G$ of cocontinuous symmetric monoidal functors $\cM \to \cM_B$ is unique, if it exists, and in that case an isomorphism, because it is completely determined by $F(A) \to G(A)$ (\Cref{pr.aux}) and this has to be the composition of the distinguished isomorphisms $F(A) \cong B$ and $B \cong G(A)$. It remains to remark that every homomorphism $\widehat{A} \to B$ which factors through some $A_i$ is induced by a cocontinuous symmetric monoidal functor $\cM \to \cM_B$, namely $\cM \to \cM_{A_i} \to \cM_{B}$.
\end{proof}

\begin{corollary}
The functor from acceptable commutative pro-algebras to symmetric monoidal locally presentable categories, mapping $\{A_i\}$ to $\varprojlim_i \cM_{A_i}$, is fully faithful.
\end{corollary}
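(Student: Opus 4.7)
The goal is to show that the natural map
\[
\varprojlim_j \varinjlim_i \hom(A_i,B_j) \;\longrightarrow\; \Hom_{\otimes}\bigl(\varprojlim_i \cM_{A_i},\,\varprojlim_j \cM_{B_j}\bigr),
\]
in which the left-hand side is by definition the pro-algebra hom-set, is an equivalence of categories for any two acceptable commutative pro-algebras $\{A_i\}_{i\in\cI^{\op}}$ and $\{B_j\}_{j\in\cJ^{\op}}$. The plan is to pass the right-hand variable through the limit using the universal property of 2-limits, reducing to a term-by-term application of \Cref{thm.tensoriality}.

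The main ingredient is that $\varprojlim_j\cM_{B_j}$ is a 2-limit not merely in $\Pres_{\bK}$, as supplied by \Cref{lemma.2limitsandcolimits}, but also in the bicategory $\Pres_{\otimes,\bK}$. Each $\cM_{B_j}$ is symmetric monoidal under $\otimes_{B_j}$; each transition functor $\cM_{B_j}\to\cM_{B_k}$, for $j\le k$ induced by the surjection $B_k\twoheadrightarrow B_j$, is cocontinuous and symmetric monoidal; and the 2-limit in $\Pres_{\bK}$ carries a canonical symmetric monoidal structure given by pointwise tensor product of compatible families, which is the unique such structure making each projection symmetric monoidal. Because $\Hom_{\otimes}(\cC,-)$ then turns this 2-limit into a 2-limit of categories, I obtain
\[
\Hom_{\otimes}\bigl(\varprojlim_i\cM_{A_i},\,\varprojlim_j\cM_{B_j}\bigr) \;\simeq\; \varprojlim_j \Hom_{\otimes}\bigl(\varprojlim_i\cM_{A_i},\,\cM_{B_j}\bigr).
\]

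The next step applies \Cref{thm.tensoriality} factor-wise, identifying each $\Hom_{\otimes}\bigl(\varprojlim_i\cM_{A_i},\cM_{B_j}\bigr)$ with $\varinjlim_i\hom(A_i,B_j)$. The proof of \Cref{thm.tensoriality} moreover shows that these hom-categories are in fact discrete, since symmetric monoidal cocontinuous natural transformations are unique when they exist; hence the 2-limit on the right is an ordinary limit of sets, equal to $\varprojlim_j\varinjlim_i\hom(A_i,B_j)$. Assembling the identifications yields the claimed equivalence.

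The single step that is not an immediate citation of earlier results is the creation of 2-limits by the forgetful 2-functor $\Pres_{\otimes,\bK}\to\Pres_{\bK}$, which I expect to be the only real item of bookkeeping. It is implicit whenever the paper speaks of $\varprojlim_j\cM_{B_j}$ as a symmetric monoidal locally presentable category, as in the definition of $\QCoh$ for an ind-scheme preceding \Cref{defn.tensoriality}, and is a routine extension of the classical closure of commutative monoids under limits in a symmetric monoidal 1-category.
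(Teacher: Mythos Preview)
Your proof is correct and follows essentially the same two-step strategy as the paper: apply \Cref{thm.tensoriality} termwise for each $B_j$, and pass the symmetric monoidal 2-limit $\varprojlim_j\cM_{B_j}$ through $\Hom_\otimes$ in the second variable. The paper's proof is terser---it records the chain of equivalences in a single line and leaves the creation of 2-limits by the forgetful 2-functor $\Pres_{\otimes,\bK}\to\Pres_\bK$ entirely implicit---so your added discussion of that point is a helpful elaboration rather than a departure.
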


Following~\cite{MR3097055,MR3144607}, this may be interpreted as the statement that affine ind-schemes (indexed by countable posets) are 2-affine.

\begin{proof}
If $A = \{A_i\}_{i\in\cI^{\op}}$ and $B = \{B_j\}_{j\in \cJ^{\op}}$ are acceptable commutative pro-algebras then
\begin{equation*}
\hom\bigl(A,B\bigr) := \varprojlim_j \varinjlim_i \hom(A_i,B_j)  \simeq \varprojlim_{j} \Hom_{\otimes}\Bigr(\! \varprojlim_i \cM_{A_i},  \cM_{B_j}\! \Bigl)  \simeq \Hom_{\otimes}\Bigr(\! \varprojlim_i \cM_{A_i},\varprojlim_j  \cM_{B_j}\! \Bigl). \end{equation*}
The first equivalence uses \Cref{thm.tensoriality}.
\end{proof}


\section{Acknowledgements}

TJF is supported by the NSF grant DMS-1304054.

\bibliographystyle{alphaabbr}

\end{document}